\newtheorem{Theorem}{Theorem}[section]
\newtheorem{Lemma}{Lemma}[section]
\newtheorem{Proposition}{Proposition}[section]
\theoremstyle{definition}
\theoremstyle{remark}
\newtheorem{Remark}{Remark}[section]
\numberwithin{equation}{section}
\renewcommand{\u}{{\bf u}}
\newcommand{\R}{{\mathbb R}}
\newcommand{\Dv}{{\rm div}}
\newcommand{\m}{{\bf m}}
\def\f{\frac}
\renewcommand{\O}{\Omega}
\def\D{\Delta }
\def\hf1{^\f{1}{1-\xi^2}}
\def\be{\begin{equation}}
\def\en{\end{equation}}
\def\bs{\begin{split}}
\def\es{\end{split}}
\renewcommand{\d}{{\bf d}}
\renewcommand{\a}{\alpha}
\newcommand{\ess}{{\rm ess\sup}}
\author{Dehua Wang}
\address{Department of Mathematics, University of Pittsburgh,
                           Pittsburgh, PA 15260.}
\email{dwang@math.pitt.edu}
\author{Cheng Yu}
\address{Department of Mathematics, University of Pittsburgh,
                           Pittsburgh, PA 15260.}
\email{chy39@pitt.edu}
\title[Global Solution to the  Flow of Liquid Crystals]
{Global weak solution and large-time behavior for the compressible flow of liquid crystals}
\keywords{Liquid crystals, weak solution, existence, large-time behavior}
\subjclass[2000]{35A05, 76A10, 76D03.}
\date{April 12, 2011} 
\begin{document}

\begin{abstract}
The three-dimensional equations for the compressible flow of liquid crystals are considered. An initial-boundary value problem is studied in a bounded domain with large data. The existence and large-time behavior of a global weak solution are established through a three-level approximation, energy estimates, and weak convergence for the adiabatic  exponent $\gamma>\frac32$.
\end{abstract}

\maketitle

\section{Introduction}
In this paper, we consider the following hydrodynamic system of partial differential equations for the
three-dimensional flow of nematic liquid crystals (\cite{DE,HK,Lin2}):
\begin{subequations}\label{I1}
\begin{align}
&\rho_{t}+\Dv(\rho\u)=0,\label{I1a}\\
&(\rho \u)_{t}+\Dv(\rho \u \otimes \u)+\nabla P(\rho)= \mu \Delta\u-\lambda \Dv\left(\nabla \d \odot \nabla \d-(\frac{1}{2} |\nabla\d|^{2}+F(\d)) I_{3}\right),\label{I1b}\\
&\d_{t}+\u\cdot\nabla\d=\theta(\Delta\d-f(\d)),\label{I1c}
\end{align}
\end{subequations}
where  $\rho\ge 0$ denotes the density, $\u\in\R^3$  the velocity, $\d\in\R^3$ the direction
field for the averaged macroscopic molecular orientations, and
$P=a\rho^{\gamma}$ is the pressure with constants $a>0$ and $\gamma\geq 1$.  The positive constants $\mu, \lambda, \theta$ denote  the viscosity, the competition between kinetic energy and potential energy, and the microscopic elastic relation time for the molecular orientation field, respectively.  The symbol $\otimes$ denotes the Kronecker tensor product,
$I_{3}$ is the $3\times3$ identity matrix, and $\nabla\d\odot\nabla\d$ denotes the $3\times 3$ matrix whose $ij$-th entry is $<\partial_{x_i} \d, \partial_{x_j}\d>$.  Indeed, $$\nabla \d \odot\nabla \d= (\nabla\d)^{\top}\nabla\d,$$
where $(\nabla\d)^{\top}$ denotes the transpose of the $3\times3$ matrix $\nabla\d$.
The vector-valued smooth function $f(\d)$ denotes the penalty function and has the following form:
 $$f(\d)=\nabla_{\d}F(\d),$$
where the scalar function $F(\d)$ is the bulk part of the elastic energy.
A typical example is to choose $F(\d)$ as the the Ginzburg-Landau penalization thus yielding the penalty function $f(\d)$ as:
$$F(\d)=\frac{1}{4\sigma_0^{2}}(|\d|^{2}-1)^{2}, \quad  f(\d)=\frac{1}{2\sigma_0^{2}}(|\d|^{2}-1)\d,$$
where $\sigma_0>0$ is a constant.  We refer the readers to \cite{Chan,DE,E,HK,Leslie1,Lin2} for more physical background and discussion of liquid crystals and mathematical models.

There have been many mathematical studies on the incompressible flows of liquid crystals.  In Lin-Liu
\cite{Lin2, Lin1, LL, LL2}, the global existence of weak solutions with large
initial data was proved under the condition that the orientational
configuration $\d(x,t)$ belongs to $H^2$, and the global existence
of classical solutions was also obtained if the coefficient $\mu$
is large enough in the three-dimensional spaces. The similar results
were obtained also in \cite{SL} for a different but similar model.
The global strong solution was established in Hu-Wang \cite{HW2}.
When the weak solutions are discussed, the partial regularity of the weak
solution similar to the classical theorem of Caffarelli-Kohn-Nirenberg \cite{CKN}  was obtained in \cite{LL2} (and also \cite{HKL}).
The existence of weak solutions to the density-dependent incompressible flow of liquid crystals was proved in \cite{JT}.
The compressible flow \eqref{I1} of liquid crystals is much more complicated and difficult to study mathematically due to the compressibility.
In the one-dimensional case the global existence of  smooth and weak solutions to  the compressible flow of liquid crystals was obtained in \cite {DLWW}.  Our aim of this paper is to establish the global existence of weak solutions $ (\rho,\u,\d)$ to the three-dimensional compressible flow \eqref{I1}  of liquid crystals in a bounded smooth domain $\O\subseteq R^{3}$, with the following initial-boundary conditions:
\begin{equation}\label{I2}\begin{split}
&(\rho, \rho\u, \d)|_{t=0}=(\rho_{0}(x), \m_{0}(x), \d_{0}(x)), \quad x\in\O,
\end{split}\end{equation}
and
\begin{equation} \label{I3}
\u|_{\partial\O}=0, \quad \d|_{\partial\O} =\d_{0}(x), \quad x\in\partial\O,
\end{equation}
 where
 \begin{equation*}
 \begin{split}
 & \rho_{0} \in L^{\gamma}(\O),\quad \rho_{0}\geq 0; \quad  \d_{0}\in L^{\infty}(\O)\cap H^{1}(\O);\\
 &\m_{0}\in L^{1}(\O),\quad  m_{0}=0 \text { if } \rho_{0}=0;  \quad \frac{|\m_{0}|^{2}}{\rho_{0}}\in L^{1}(\O).
 \end{split}
 \end{equation*}

When the direction field $\d$ does not appear, \eqref{I1} reduces to the compressible Navier-Stokes equations. For the compressible Navier-Stokes equations, Lions in \cite{L} introduced the concept of  renormalized solutions to overcome the difficulties of large oscillations  and proved the global existence of finite energy weak solutions for $\gamma>9/5$, and then Feireisl, et al,  in \cite{F, FNP, FP} extended the existence results to $\gamma>3/2$.
 Hu-Wang in \cite{HW} adopted  Feireisl's techniques to obtain global existence and large-time behavior of weak solutions with large initial data for the magnetohydrodynamics.
In this paper we shall study the initial-boundary value problem \eqref{I1}-\eqref{I3} for  liquid crystals  and establish the global existence and large-time behavior of weak solutions for large initial data in certain functional spaces with $\gamma>3/2$.
To achieve our goal, we will use a three-level approximation scheme similar to that in \cite{F, FNP}, which consists of
Faedo-Galerkin approximation, artificial viscosity, and artificial pressure. Then, motivated by the work of \cite{FNP}, we will show that the uniform bound of  the density $\rho^{\gamma+\alpha} $ in $L^{1}$ for some $\alpha>0$ ensures the vanishing of artificial pressure and the strong compactness of the density. To overcome the difficulty of possible large oscillation of the density, we adopt the idea of Lions and Feireisl in \cite{F, FNP, LL}  based on the weak continuity of the effective viscous flux
 for the Navier-Stokes equations. For our equations \eqref{I1} of liquid crystals, the effective viscous flux is $P-\mu\Dv \u$. For this purpose, we also need to develop some estimates to deal with the direction field and its coupling and interaction with the fluid variables. To deal with the equation \eqref{I1c} for a given $\u$, we will follow the same idea of Hu-Wang \cite{HW} to establish the solvability of the direction field. It is crucial to obtain sufficiently strong estimates on the direction field $\d$ to recover the original system \eqref{I1}. We will derive an energy inequality from \eqref{I1} directly, but it can not provide us with sufficient regularity for the direction field $\d$. Thanks to the Gagliardo-Nirenberg inequality and the maximum principle, we deduce that $\nabla\d \in L^{4}((0,T)\times\O)$ for any $T>0$ which can be used to control the strongly nonlinear terms
containing $\nabla\d$ in \eqref{I1}.  Then we finally establish the existence of global weak solution to \eqref{I1}-\eqref{I3}.
We just noticed that a similar existence result was obtained independently in \cite{LLQ}.
Motivated by \cite{FP} and \cite{HW}, we will also establish the large-time behavior of the global weak solutions.

We organize the rest of the paper as follows. In Section 2, we deduce a prior estimates from \eqref{I1}, give the definition of  finite energy weak solutions, and also state our main results. In Section 3, we discuss the solvability of the direction vector $\d$ in terms of $\u$. In Section 4, we establish the global existence of solutions to the Faedo-Galerkin approximation to \eqref{I1}. In Section 5 and Section 6, we use the uniform estimates to recover the original system by vanishing the artificial viscosity and artificial pressure respectively. In Section 7, we prove the large-time behavior of the global weak solutions.

\bigskip

\section{Energy Estimates and Main Results}

In this section, we  derive some basic energy estimates for the initial-boundary problem \eqref{I1}-\eqref{I3}, introduce the notion of finite energy weak solutions in the spirit of Feireisl \cite{F, FNP},  and state the main results.

Without loss of generality, we take $\theta=a=1$.
First we formally derive the energy equality and some a priori estimates, which will play a very important role in our paper.
Multiplying \eqref{I1b} by $\u$,  integrating over $\O$, and using the boundary condition  \eqref{I3},
we obtain
\begin{equation*}
\begin{split}
&\partial_{t}\int_{\O}{\left(\frac{1}{2}\rho|\u|^{2}+\frac{\rho^{\gamma}}{\gamma-1}\right)dx}+\mu\int_{\O}|\nabla \u|^{2}dx\\
&=-\lambda\int_{\O}\Dv\left(\nabla\d\odot\nabla\d-(\frac{1}{2}|\nabla\d|^{2}+F(\d))I_{3}\right)\u dx.
\end{split}
\end{equation*}
Using the equality $$\Dv(\nabla\d \odot \nabla\d)=\nabla(\frac{1}{2}|\nabla\d|^{2})+(\nabla\d)^{\top}\cdot\Delta\d,$$
we have \begin{equation*}\begin{split}
&\int_{\O}\Dv\left(\nabla\d\odot\nabla\d-(\frac{1}{2}|\nabla\d|^{2}+F(\d))I_{3}\right)\u dx
\\ & =\int_{\O}(\nabla\d)^{\top}\cdot\Delta\d\cdot \u dx-\int_{\O}\nabla_{\d}F(\d)\u dx.
\end{split}\end{equation*}
Hence,  we obtain
\begin{equation}\label{EI1}
\begin{split}
&\partial_{t}\int_{\O}{\left(\frac{1}{2}\rho|\u|^{2}+\frac{\rho^{\gamma}}{\gamma-1}\right)dx}+\int_{\O}|\nabla \u|^{2}dx\\
&=-\lambda\int_{\O}(\nabla\d)^{\top}\cdot\Delta\d\cdot \u dx+\lambda\int_{\O}\nabla_{\d}F(\d)\u dx.
\end{split}
\end{equation}
Multiplying by $\lambda(\Delta \d-f(\d))$ the both sides of \eqref{I1c} and integrating over $\O$, we get
\begin{equation*}
\begin{split}
&-\partial_{t}\int_{\O}\lambda\frac{|\nabla\d|^{2}}{2} dx-\partial_{t}\int_{\O}\lambda F(\d)dx-\int_{\O}\lambda\nabla_{\d}F(\d)\u dx
+\lambda\int_{\O}(\nabla\d)^{\top}\cdot\Delta\d\cdot\u dx \\
&=\lambda\int_{\O}|\Delta\d-f(\d)|^{2}dx.
\end{split}
\end{equation*}
Then,  from \eqref{EI1},  we have the following energy equality to the system \eqref{I1},
\begin{equation}\label{EI2}
\begin{split}
&\partial_{t}\int_{\O}\left(\frac{1}{2}\rho|\u|^{2}+\frac{\rho^{\gamma}}{\gamma-1}
   +\frac{\lambda}{2}|\nabla\d|^{2}+\lambda F(\d)\right)dx\\
&\qquad   +\int_{\O}\left(\mu|\nabla\u|^{2}dx+\lambda|\Delta\d-f(\d)|^{2}\right)dx\\
&=0.\end{split}\end{equation}

Set
$$E(t)=\int_{\O}\left(\frac{1}{2}\rho|\u|^{2}+\frac{\rho^{\gamma}}{\gamma-1}+\frac{\lambda}{2}|\nabla \d|^{2}
+\lambda F(\d) \right)(t,x)dx,$$
and assume that $E(0)<\infty$.
From \eqref{EI2}, we have the following a priori estimates:
$$\rho |\u|^{2} \in L^{\infty}([0,T];L^{1}(\O));$$
$$\rho \in L^{\infty}([0,T];L^{\gamma}(\O));$$
$$\nabla \d \in L^{\infty}([0,T];L^{2}(\O));$$
$$F(\d)\in L^{\infty}([0,T];L^{1}(\O));$$
$$\nabla\u \in L^{2}([0,T];L^{2}(\O));$$
and also\begin{equation}\label{aaa}\Delta \d-f(\d)\in L^{2}([0,T];L^{2}(\O)).\end{equation}
Although the above estimates will play very important roles in proving of our main existence theorem, they cannot provide sufficient regularity for the direction field $\d$ to control the strongly nonlinear terms containing $\nabla\d$. To overcome this difficulty, we need the following lemma (see \cite{FRS}):

\begin{Lemma}\label{Lemma1}
If there exists a constant $C_{0}>0$ such that  $ \d\cdot f(\d)\geq 0 \text{ for all } |\d|\geq C_{0}>0, $ then $\d\in L^{\infty}((0,T)\times\O)$, $\nabla\d \in L^{4}((0,T)\times \O).$ \end{Lemma}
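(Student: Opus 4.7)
The plan is to derive the two conclusions in sequence: the $L^\infty$ bound on $\d$ follows from a parabolic maximum-principle argument applied to $|\d|^2$, and the $L^4$ bound on $\nabla\d$ then follows from \eqref{aaa} combined with elliptic regularity and a Gagliardo--Nirenberg interpolation.

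For the first assertion, I would take the inner product of \eqref{I1c} (with $\theta = 1$) with $2\d$ to obtain the scalar parabolic equation
\begin{equation*}
\partial_t |\d|^2 + \u \cdot \nabla |\d|^2 - \Delta |\d|^2 = -2|\nabla \d|^2 - 2\,\d \cdot f(\d).
\end{equation*}
By hypothesis, the right-hand side is non-positive on the set $\{|\d| \ge C_0\}$. Setting $M := \max(C_0, \|\d_0\|_{L^\infty(\Omega)})$, I would test this identity against $(|\d|^2 - M^2)_+$, integrate over $\Omega$, and use $\u|_{\partial\Omega} = 0$ to integrate the convective term by parts, obtaining
\begin{equation*}
\frac{d}{dt}\int_\Omega(|\d|^2 - M^2)_+^2\,dx \le \|\Dv\u\|_{L^\infty(\Omega)} \int_\Omega(|\d|^2 - M^2)_+^2\,dx.
\end{equation*}
Since $(|\d|^2 - M^2)_+$ vanishes initially (by the choice of $M$) and on $\partial\Omega$ (because $\d|_{\partial\Omega} = \d_0$), Gronwall's inequality forces $|\d| \le M$ almost everywhere, giving $\d \in L^\infty((0,T) \times \Omega)$.

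For the second assertion, the $L^\infty$ bound just proved, together with smoothness of $f$, yields $f(\d) \in L^\infty((0,T) \times \Omega)$, and combining this with \eqref{aaa} upgrades the information to $\Delta\d \in L^2(0,T;L^2(\Omega))$. Standard elliptic regularity for the Dirichlet problem then delivers $\d \in L^2(0,T; H^2(\Omega))$ (subtracting a fixed smooth extension of $\d_0$ if needed to reduce to homogeneous boundary data). I would then invoke the three-dimensional Gagliardo--Nirenberg inequality
\begin{equation*}
\|\nabla\d\|_{L^4(\Omega)}^2 \le C\,\|\d\|_{L^\infty(\Omega)}\,\|D^2\d\|_{L^2(\Omega)},
\end{equation*}
and integrate its fourth power in time to conclude
\begin{equation*}
\int_0^T \|\nabla\d\|_{L^4(\Omega)}^4\,dt \le C\,\|\d\|_{L^\infty((0,T)\times\Omega)}^2 \int_0^T \|D^2\d\|_{L^2(\Omega)}^2\,dt < \infty.
\end{equation*}

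The main obstacle is the rigorous justification of the maximum-principle step, since this lemma has to be applied to solutions of the approximate systems constructed in later sections rather than to smooth classical solutions. One must verify that $\u$ has enough regularity (automatic at the Faedo--Galerkin level, where $\u$ lies in a finite-dimensional smooth subspace) for $\Dv\u$ to be essentially bounded in time and for the truncation/Gronwall manipulations above to be legitimate. A minor technical point is that if the trace of $\d_0$ on $\partial\Omega$ does not meet the regularity required by the $H^2$ elliptic estimate, one argues instead on $\d - \tilde\d_0$ for a fixed smooth extension $\tilde\d_0$ of the boundary datum, which leaves the Gagliardo--Nirenberg step intact.
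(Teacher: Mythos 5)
Your proposal is correct and follows essentially the same route as the paper: the parabolic identity for $|\d|^2$ combined with the sign condition $\d\cdot f(\d)\ge 0$ gives the $L^\infty$ bound via the maximum principle, and then \eqref{aaa}, elliptic regularity, and Gagliardo--Nirenberg yield $\nabla\d\in L^4$. The only difference is that you implement the maximum principle rigorously by a Stampacchia-type truncation with $(|\d|^2-M^2)_+$ and Gronwall, whereas the paper simply invokes the maximum principle for $|\d|^2$; your version is a legitimate (and more careful) realization of the same step.
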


\begin{proof}
On one hand, if $|\d|<C_{0}$,  we have $$\d \in L^{\infty}((0,T)\times\O).$$
On the other hand, if $|\d|\ge C_{0}$,  taking the scalar product of equation \eqref{I1c} with $\d$ yields
\begin{equation*}
\partial_{t}|\d|^{2}-\Delta|\d|^{2}+\u\cdot\nabla|\d|^{2}+2\d\cdot f(\d)+2|\nabla\d|^{2}=0,
\end{equation*}
which implies
\begin{equation*}
\partial_{t}|\d|^{2}-\Delta|\d|^{2}+\u\cdot\nabla|\d|^{2}\leq0.
\end{equation*}
Using the maximum principle for $|\d|^{2}$ to obtain
\begin{equation}
\label{EI5}\d \in L^{\infty}((0,T)\times\O).
\end{equation}
Using \eqref{aaa}, \eqref{EI5}, smoothness of $f$, and together with elliptic estimate, we get
\begin{equation*}
\d\in L^{2}(0,T;H^{2}(\O)).
\end{equation*}
Using the Gagliardo-Nirenberg inequality, for some constant $C>0$,
\begin{equation*}
\|\nabla\d\|_{L^{4}(\O)}\leq C\|\Delta\d\|_{L^{2}(\O)}^{\frac{1}{2}}\|\d\|_{L^{\infty}(\O)}^{\frac{1}{2}}+C\|\d\|_{L^{\infty}(\O)},
\end{equation*}
which means
\begin{equation*}
\nabla\d \in L^{4}((0,T)\times\O).
\end{equation*}
The proof is complete.
\end{proof}

Through our paper, we will use $C$ to denote a generic positive constant, $\mathcal{D}$ to denote $C_{0}^{\infty}$,  and $\mathcal{D'}$ to denote the sense of distributions.
To introduce the finite energy weak solution $(\rho,\u,\d)$, we also need to take a differentiable function $b$, and multiply \eqref{I1a} by $b^{'}(\rho)$ to get the renormalized form:
\begin{equation}\label{EI7}
b(\rho)_{t}+\Dv(b(\rho)\u)+(b^{'}(\rho)\rho-b(\rho))\Dv \u =0. \end{equation}
We define the finite energy weak solution $(\rho,\u,\d)$ to the initial-boundary value problem \eqref{I1}-\eqref{I3} in the following sense: for any $T>0$,
\begin{itemize}
\item  $\rho\geq 0, \quad \rho \in L^{\infty}([0,T];L^{\gamma}(\O)),\quad \u \in L^{2}([0,T];W^{1,2}_{0}(\O)),$
           $$\d \in L^{\infty}((0,T)\times\O) \cap L^{\infty}([0,T];H^{1}(\O))\cap L^{2}([0,T]; H^{2}(\O)),$$
          with $(\rho, \rho\u, \d)(0,x)=(\rho_{0}(x), \m_{0}(x), \d_{0}(x))$ for $x\in\O$;
\item The equations \eqref{I1} hold in $D^{'}((0,T)\times\O)$,  and \eqref{I1a} holds in $D^{'}((0,T)\times\R^{3})$ provided $\rho,\u$ are prolonged to be zero on $\R^{3}\setminus\O;$
\item \eqref{EI7} holds in $D^{'}((0,T)\times\O),$ for any $b\in C^{1}(\R^+)$ such that
\begin{equation}\label {EI8}b^{'}(z)=0 \text{ for all } z \in \R^+ \text{ large enough}, \text{ say } z \geq M, \end{equation}
where the constant $M$ may vary for different function $b$;
\item The energy inequality
$$E(t)+\int_0^t\int_{\O}\left(\mu|\nabla\u|^{2}dx+\lambda|\Delta\d-f(\d)|^{2}\right)dx ds\le E(0)$$
holds for almost every $t\in[0,T]$.
 \end{itemize}

\begin{Remark}
It's possible to deduce that \eqref{EI7} will hold for any $b \in C^{1}(0,\infty)\cap C[0,\infty)$ satisfying the following conditions
\begin{equation}\label{EI9}|b^{'}(z)|\leq c(z^{\a}+z^{\frac{\gamma}{2}})\text{ for all } z>0 \text{ and a certain }\a\in(0,\frac{\gamma}{2}) \end{equation} provided $(\rho,\u,\d)$ is a finite energy weak solution in the sense of the above definition (see details in \cite{FNP}).\end{Remark}

Now, our main result on the existence of finite energy weak solutions reads as follows:

\begin{Theorem}\label{T1}
Assume that $\O \subset \R^{3}$ is a bounded domain  of the class $C^{2+\nu}$, $\nu >0$,
and $\gamma>\frac{3}{2}$.
If there exists a $C_{0}>0$, such that  $ \d\cdot f(\d)\geq 0 \text{ for all } |\d|\geq C_{0}>0.$
 Then for any given $T>0$, the initial-boundary conditions \eqref{I1}-\eqref{I3} has a finite weak energy solution $(\rho,\u,\d)$ on $(0,T)\times \O.$
\end{Theorem}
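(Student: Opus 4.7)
The overall strategy is the three-level approximation scheme (Faedo--Galerkin / artificial viscosity / artificial pressure) of Lions \cite{L} and Feireisl et al.\ \cite{F,FNP}, adapted to handle the coupling between the fluid and the director field $\d$. The new ingredient, compared with the Navier--Stokes case, is that the momentum equation \eqref{I1b} carries the strongly nonlinear source $-\l\Dv\!\left(\nabla\d\odot\nabla\d-(\tfrac12|\nabla\d|^2+F(\d))I_3\right)$, and must be closed in a system coupled to the parabolic equation \eqref{I1c} for $\d$. The key observation is that Lemma~\ref{Lemma1} delivers exactly the regularity $\d\in L^\infty$ and $\nabla\d\in L^4_{t,x}$ needed to make this source compatible with Feireisl's machinery. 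The plan is to solve first the director equation for a given velocity, then the coupled Galerkin system with regularized density equation $\rho_t+\Dv(\rho\u)=\epsilon\Delta\rho$ and augmented pressure $P(\rho)+\dl\rho^\beta$ (for $\beta$ large), and finally to pass successively $n\to\infty$, $\epsilon\to 0$, and $\dl\to 0$.

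First I would treat the $\d$-equation \eqref{I1c} with $\u$ fixed in a smooth class (the Galerkin subspace $X_n=\operatorname{span}\{\va_1,\dots,\va_n\}$ with $\va_i$ eigenfunctions of the Dirichlet Laplacian). By standard parabolic theory plus a Schauder / contraction fixed point argument one obtains a unique solution $\d=\d[\u]$ with $\d|_{\partial\O}=\d_0$, continuously depending on $\u$; the sign assumption $\d\cdot f(\d)\ge 0$ for $|\d|\ge C_0$ gives $\d\in L^\infty$ via the maximum principle, and combined with the $L^2_tL^2_x$ bound on $\Delta\d-f(\d)$ coming from the energy identity \eqref{EI2} and the Gagliardo--Nirenberg argument of Lemma~\ref{Lemma1} yields $\d\in L^2_tH^2_x$ and $\nabla\d\in L^4_{t,x}$. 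Composing with the regularized continuity equation one obtains a local-in-time fixed point for $\u_n\in X_n$ solving the Galerkin momentum equation, which is then extended to $[0,T]$ by the uniform energy estimate \eqref{EI2} enhanced with the viscosity and artificial pressure dissipation $\epsilon\int_0^T\int_\O|\nabla\rho^{\beta/2}|^2\,dx\,dt$.

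The Galerkin limit $n\to\infty$ is routine: the energy inequality together with Aubin--Lions gives enough compactness, and the strong convergence of $\rho^n$ is inherited from the regularization $\epsilon\Delta\rho$. The vanishing viscosity limit $\epsilon\to 0$ uses the Lions effective viscous flux identity, now in the form $P(\rho)+\dl\rho^\beta-(\mu+\epsilon)\Dv\u$, together with the renormalized continuity equation \eqref{EI7} to propagate strong compactness of $\rho$. At this level the $\d$-terms cause no trouble, because $\d\in L^2_tH^2_x$ gives strong convergence of $\nabla\d\odot\nabla\d$ in $L^1_{t,x}$. The most delicate step is the final passage $\dl\to 0$: one must show $\dl\rho^\beta\to 0$, which requires higher integrability $\rho^{\gamma+\a}\in L^1_{t,x}$ uniformly in $\dl$ for some $\a>0$, obtained by testing the momentum equation with the Bogovskii-type multiplier $\nabla\Delta^{-1}[\rho^\a-(\rho^\a)_\O]$. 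Here the assumption $\gamma>\tfrac32$ enters, and the extra source from $\d$ is controlled by the $L^4_{t,x}$ bound $\nabla\d\odot\nabla\d\in L^2_{t,x}$ from Lemma~\ref{Lemma1}.

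The main obstacle is the last step: to carry out the weak continuity of the effective viscous flux $P(\rho)-\mu\Dv\u$ in the presence of the liquid-crystal stress. Concretely, one must show that the defect $\overline{\rho^\gamma\rho}-\overline{\rho^\gamma}\,\rho$ identified by the Feireisl commutator argument still vanishes after inserting the extra forcing $-\l\Dv(\nabla\d\odot\nabla\d-\cdots)$ into the momentum equation. This hinges on $\nabla\d\odot\nabla\d$ being strongly convergent (so that all commutators with Riesz-type operators close), and on the renormalized continuity equation \eqref{EI7} being valid for the limit pair $(\rho,\u)$ through the usual $L^p$-oscillation defect measure estimate. Once the effective viscous flux identity is established the standard Feireisl argument propagates strong compactness of the density and produces the finite energy weak solution $(\rho,\u,\d)$ satisfying the energy inequality for a.e.\ $t\in[0,T]$.
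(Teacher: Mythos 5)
Your plan is correct and follows essentially the same route as the paper: solvability and continuous dependence of $\d[\u]$ first, then the Faedo--Galerkin/artificial viscosity/artificial pressure scheme with Lemma~\ref{Lemma1} supplying the $L^\infty$ and $L^4_{t,x}$ bounds on $\d$ and $\nabla\d$ that control the extra stress, followed by the effective viscous flux identity, higher integrability of $\rho$ via Bogovskii-type multipliers, and the oscillation defect measure argument for strong compactness of the density. The only cosmetic discrepancies (eigenfunction basis versus a dense set in $C^2_0$, the form of the flux at the $\varepsilon$-level, and the omission of the compensating term $\varepsilon\nabla\u\cdot\nabla\rho$ in the approximate momentum equation) do not affect the validity of the argument.
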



\begin{Remark} The typical example
$$F(\d)=\frac{1}{4\sigma_0^{2}}(|\d|^{2}-1)^{2}, \quad  f(\d)=\frac{1}{2\sigma_0^{2}}(|\d|^{2}-1)\d,$$
 satisfies the assumption of Theorem \ref{T1} with $C_0=1$. Thus the theorem holds for the typical case. \end{Remark}

Motivated by \cite{FP} and \cite {HW}, we establish the following result on the large-time behavior of the weak solutions to the problem \eqref{I1}-\eqref{I3}:
\begin{Theorem}\label{T2}
Assume that $(\rho,\u,\d)$ is the finite energy weak solution to \eqref{I1}-\eqref{I3} given in Theorem \ref{T1}, then there exists a stationary state of the density $\rho_{s}$ which is a positive constant, a stationary state of velocity $\u_{s}=0$, and a stationary state of direction field $\d_{s}$ such that, as $t\to\infty,$
\begin{equation}\label{Tlar1}\rho(t,x)\to \rho_{s} \text{ strongly in } L^{\gamma}(\O),\end{equation}
\begin{equation}\u(t,x)\to\u_{s}=0 \text{ strongly in } L^{2}(\O),\end{equation}
\begin{equation}\d(t,x)\to \d_{s} \text{ strongly in } H^{1}(\O),\end{equation}
where $\d_{s}$ solves the  equation \begin{equation}\label{equd1}\D \d_{s}=f(\d_{s})\end{equation} with the boundary condition\begin{equation}\label{equd2}\d_{s}|_{\partial\O}=\d_{0};\end{equation}
and $\rho_{s}$ satisfies the  following relation:
 \begin{equation}\label{equdensity}\nabla\rho_{s}^{\gamma}=-\lambda\Dv\left(\nabla\d_{s}\odot\nabla\d_{s}-(\frac{1}{2}|\nabla\d_{s}|^{2}+F(\d_{s}))I_{3}\right).\end{equation}
\end{Theorem}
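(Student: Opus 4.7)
My plan is to follow the Feireisl--Petzeltov\'a framework used in \cite{FP,HW}: derive decay of the dissipative integrals from the energy inequality, introduce time-shifted sequences, pass to the limit along subsequences, and finally identify the stationary limit by the uniqueness of the limiting system.

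\textbf{Step 1: Dissipative bounds over $[0,\infty)$.} Because the energy inequality is satisfied on every $[0,t]$ and $E(t)\ge 0$, letting $t\to\infty$ yields
\begin{equation*}
\int_0^{\infty}\!\!\int_\O \bigl(\mu|\nabla\u|^2+\l|\D\d-f(\d)|^2\bigr)\,dx\,ds\le E(0)<\infty.
\end{equation*}
Consequently, setting $I_n:=[n,n+1]$, we have $\int_{I_n}\!\int_\O(|\nabla\u|^2+|\D\d-f(\d)|^2)\,dx\,ds\to 0$ as $n\to\infty$. By the Poincar\'e inequality (noting $\u|_{\del\O}=0$) this gives $\u\to 0$ in $L^2(I_n;H^1_0(\O))$ and in particular in $L^2(I_n;L^2(\O))$.

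\textbf{Step 2: Shifted sequences and compactness.} Define
$\r_n(t,x):=\r(t+n,x)$, $\u_n(t,x):=\u(t+n,x)$, $\d_n(t,x):=\d(t+n,x)$ on $(0,1)\times\O$. Each triple satisfies \eqref{I1} on $(0,1)\times\O$ with uniform energy bound $E(t+n)\le E(0)$, so the a priori estimates obtained in Section 2 together with Lemma \ref{Lemma1} apply uniformly in $n$. In particular $\r_n$ is uniformly bounded in $L^\infty(0,1;L^\gamma(\O))$, $\u_n\to 0$ in $L^2((0,1)\times\O)$, $\d_n$ is bounded in $L^\infty(0,1;H^1\cap L^\infty)\cap L^2(0,1;H^2)$ and $\D\d_n-f(\d_n)\to 0$ in $L^2((0,1)\times\O)$. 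Using the momentum and director equations to control the time derivatives, Aubin--Lions gives strong compactness of $\d_n$ in $L^2(0,1;H^1(\O))$ (after extracting a subsequence).

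\textbf{Step 3: Identification of the limit.} Let $(\r_s,\u_s,\d_s)$ be a (subsequential) limit in the appropriate weak-$*$ senses. From $\u_n\to 0$ we obtain $\u_s=0$. Passing to the limit in \eqref{I1c} (using the strong $H^1$ compactness of $\d_n$ and smoothness of $f$) and using $\D\d_n-f(\d_n)\to 0$, the limit $\d_s$ depends only on $x$ and solves $\D\d_s=f(\d_s)$ with $\d_s|_{\del\O}=\d_0$, which is exactly \eqref{equd1}--\eqref{equd2}. Conservation of mass $\int_\O \r(t,\cdot)\,dx=\int_\O\r_0\,dx$ (a consequence of \eqref{I1a}) combined with the fact that $\r_s$ must be time-independent in the limit (since $\partial_t\r=-\Dv(\r\u)$ and $\u\to 0$) forces $\r_s=\tfrac{1}{|\O|}\int_\O\r_0$, a positive constant. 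Passing to the limit in the momentum equation \eqref{I1b} (where the inertial and viscous terms vanish due to $\u_n\to 0$ in $L^2(H^1_0)$) yields \eqref{equdensity} for the limit triple.

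\textbf{Step 4: Strong convergence of the density and full convergence.} The delicate point, and the main obstacle, is upgrading the weak convergence $\r_n\rightharpoonup\r_s$ to strong convergence in $L^\gamma$. The strategy is to replay the effective viscous flux argument used in Sections 5--6 (following Lions and Feireisl): show that $\r_n$ is a renormalized solution of the continuity equation on $(0,1)\times\O$, establish weak continuity of the effective viscous flux $P(\r_n)-\mu\Dv\u_n$ along the sequence, and then conclude via the renormalized equation (with $b(z)=z\log z$) that the defect measure vanishes. Because $\u_n\to 0$ strongly, the usual commutator obstacles are in fact easier than in the existence proof, and one obtains $\r_n\to\r_s$ in $L^1$, hence in $L^\gamma$ for every $\gamma$ covered by the a priori bound and interpolation.

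Finally, since the stationary problem \eqref{equd1}--\eqref{equdensity} with the prescribed average $\tfrac{1}{|\O|}\int_\O\r_0$ admits a unique solution once $\d_s$ (and hence the right-hand side of \eqref{equdensity}) is fixed, every subsequential limit agrees with $(\r_s,0,\d_s)$; a standard subsequence argument then promotes the convergence along integer times to convergence as $t\to\infty$, establishing \eqref{Tlar1} and the remaining convergences.
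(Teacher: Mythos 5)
Your proposal follows the same overall scheme as the paper (the Feireisl--Petzeltov\'a time-shift method of \cite{FP,HW}): integrability of the dissipation over $(0,\infty)$, shifted sequences on $(0,1)\times\O$, identification of $\u_s=0$ and of $\d_s$ as the solution of \eqref{equd1}--\eqref{equd2}, time-independence of $\r_s$ from the continuity equation, and a higher-integrability bound $\r_m^{\gamma+\a}\in L^1$ before passing to the limit in the momentum equation. The one place where you genuinely diverge is the strong convergence of the density: you propose to replay the effective viscous flux/renormalized-equation machinery of Sections 5--6, whereas the paper invokes the $L^p$ div-curl lemma (as in \cite{FP,HW}) to upgrade the weak convergence \eqref{lar5} to strong convergence. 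Both routes are viable; the div-curl argument is the lighter tool here precisely because the limiting momentum balance \eqref{lar6} is a pure gradient identity and the transport terms vanish with $\u_m$, so the full viscous-flux commutator analysis you sketch is more than is needed. Note also that the paper derives the constancy of $\r_s$ \emph{a posteriori} from \eqref{equdensity} and $f(\d_s)=\nabla_{\d_s}F(\d_s)$ (giving $\nabla\r_s^{\gamma}=0$), rather than asserting it from mass conservation alone; your shortcut implicitly presumes what the gradient identity is needed to prove. Finally, your closing ``standard subsequence argument'' glosses over a real step: strong convergence of $\r_m$ in $L^\gamma((0,1)\times\O)$ does not by itself give $\r(t,\cdot)\to\r_s$ in $L^\gamma(\O)$ for \emph{every} $t\to\infty$ as asserted in \eqref{Tlar1}. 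The paper closes this gap by showing the energy $E(t)$ converges, identifying its limit with the energy of the stationary state, deducing $\|\r(t)\|_{L^\gamma}\to\|\r_s\|_{L^\gamma}$, and combining norm convergence with weak convergence via the uniform convexity of $L^\gamma$; some argument of this kind is needed to finish your proof.
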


\bigskip

\begin{Remark} The existence and uniqueness of \eqref{equd1} and \eqref{equd2} can be guaranteed from the elliptic theory,
and $\d_{s}\in C^{2}(\O)\cap C^{1}(\overline{\O})$ by the standard elliptic estimates (see \cite{GT}).  \end{Remark}

\begin{Remark} Denote $H=\rho_{s}^{\gamma}-\lambda F(\d_{s})$,
then equation \eqref{equdensity} can be rewritten as
\begin{equation}\label{asym}\nabla H =-\lambda\nabla\d_{s}\cdot\D\d_{s}.\end{equation}
By \eqref{equd1} and $f(\d_{s})=\nabla_{\d_{s}}F(\d_{s})$, we can deduce that $$\nabla(H+\lambda F(\d_{s}))=0,$$
that is, $$\nabla\rho_{s}^{\gamma}=0.$$
By the way, in general, we cannot solve the equation \eqref{asym} without the condition $f(\d_{s})=\nabla_{\d_{s}}F(\d_{s}).$\end{Remark}

\begin{Remark} Our asymptotic equations \eqref{equd1}-\eqref{equdensity} for the compressible flow of liquid crystals  as $t\to\infty$ are similar to those for the incompressible flow of liquid crystals  obtained by Lin-Liu (see \cite{LL}). In particular, the asymptotic equations  \eqref{equd1}, \eqref{equd2},  and \eqref{asym} share the same form with those in \cite{LL}. \end{Remark}

The proof of Theorem \ref{T1} is based on the following approximation scheme:
\begin{subequations}\label{EI00}
\begin{align}&\rho_{t}+\Dv(\rho\u)=\varepsilon\Delta\rho,\label{EI00a}\\
&(\rho\u)_{t}+\Dv(\rho\u\otimes\u)+\nabla P(\rho)+\delta\nabla \rho^{\beta}+\varepsilon \nabla\u\cdot\nabla \rho \notag\\
&\qquad\qquad\qquad\qquad
 = \mu\Delta\u-\lambda\Dv\left(\nabla\d\odot\nabla\d-(\frac{1}{2}|\nabla\d|^{2}+F(\d)) I_{3}\right),\label{EI00b}\\
& \d_{t}+\u\cdot\nabla\d=\Delta\d-f(\d),\label{EI00c}
\end{align}\end{subequations}
with appropriate initial-boundary conditions.
Following the approach of Feireisl \cite{F, FNP},  we shall obtain the solution of \eqref{I1} when $\varepsilon\to 0$ and $ \delta\to 0$ in \eqref{EI00}. We can solve equation \eqref{EI00a} provided $\u$ is given. Indeed, we can obtain the existence by using classical theory of parabolic equation and overcome the difficulty of vacuum. Next we can also solve equation \eqref{EI00c} when $\u$ is fixed. By a direct application of the Schauder fixed point theorem, we can establish the local existence of $\u$,  and then extend this local solution to the whole time interval. Note that the addition of the extra term $\varepsilon\nabla\u\cdot\nabla\rho$ is necessary for keeping the energy conservation. The last step is to let $\varepsilon\to 0$ and $ \delta\to 0$ to recover the original system.  We remark that the strongly nonlinear terms containing $\nabla\d$ can be controlled by the sufficiently strong estimate about $\nabla\d$ obtained from the Gagliardo-Nirenberg inequality. In order to control the possible oscillations of the density $\rho$, we adopt the methods in Lions \cite{L} and Feireisl \cite{F, FNP} which is based on the celebrated weak continuity of the effective viscous flux $P-\mu \Dv \u$.  We refer the readers to Lions \cite{L}, Feireisl \cite{F,FNP}, and Hu-Wang \cite{HW} for discussions on the effective viscous flux.
\bigskip

\section{The Solvability of the Direction Vector}

To solve the approximation system \eqref{EI00} by the Faedo-Galerkin method, we need to show that the following system can be uniquely solved in terms of $\u$:
\begin{subequations}\label{D1}\begin{align}&\d_{t}+\u\cdot\nabla\d=\Delta\d-f(\d),\label{D1a}\\
&\d|_{t=0}=\d_{0},\quad \d|_{\partial\O}=\d_{0},\label{D1b} \end{align}\end{subequations}
which can be achieved by the two lemmas below.

\begin{Lemma} \label{Le2} If $\u \in C([0,T];C^{2}_{0}(\bar{\O},\R^{3})),$ then there exists at most one function  $$ \d \in L^{2}(0,T;H^{2}_{0}(\O))\cap L^{\infty}([0,T];H^{1}(\O))$$
which solves \eqref{D1} in the weak sense on $\O\times(0,T)$,  and satisfies the initial and boundary  conditions in the sense of traces.\end{Lemma}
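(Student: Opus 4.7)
The plan is a standard energy-method uniqueness argument on the difference of two candidate solutions. Let $\d_1, \d_2 \in L^2(0,T; H^2_0(\O)) \cap L^\infty(0,T; H^1(\O))$ both solve \eqref{D1} with the given data, and set $\w := \d_1 - \d_2$. Then $\w|_{t=0}=0$, $\w|_{\partial\O}=0$, and subtracting the equations gives
$$\w_t + \u \cd \na \w = \D \w - \bigl(f(\d_1) - f(\d_2)\bigr) \quad\text{in } (0,T)\times\O.$$
I would take the $L^2(\O)$ inner product of this identity with $\w$ and integrate in time, aiming to close the estimate by Gronwall.

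Three terms appear on the right. Integration by parts, using $\w|_{\partial\O}=0$, gives
$$\int_\O (\u \cd \na \w) \cd \w \, dx = -\f{1}{2}\int_\O (\Dv \u)\, |\w|^2 \, dx,$$
which is bounded by $C_\u\,\|\w\|_{L^2(\O)}^2$ since $\u \in C([0,T]; C^2_0(\bar\O))$ provides an $L^\infty_{t,x}$ bound on $\Dv \u$. The diffusion term integrates by parts to the favorable $-\|\na\w\|_{L^2(\O)}^2$. For the penalty term, the smoothness of $f$ makes it Lipschitz on bounded sets; the uniqueness class $\d_i \in L^2(0,T; H^2(\O))$, combined with the three-dimensional Sobolev embedding $H^2(\O) \hookrightarrow L^\infty(\O)$, yields $\|\d_i(t)\|_{L^\infty(\O)} \in L^2(0,T)$, so there exists $G \in L^1(0,T)$ with
$$|f(\d_1(t,\cd)) - f(\d_2(t,\cd))| \leq G(t)\,|\w(t,\cd)| \quad\text{a.e.,}$$
and hence the penalty contribution is at most $G(t)\,\|\w\|_{L^2(\O)}^2$.

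Combining these estimates yields
$$\f{d}{dt}\|\w\|_{L^2(\O)}^2 + 2\|\na\w\|_{L^2(\O)}^2 \leq \bigl(2C_\u + 2G(t)\bigr)\,\|\w\|_{L^2(\O)}^2,$$
with $2C_\u + 2G \in L^1(0,T)$, so Gronwall's inequality together with $\w(0)=0$ forces $\w \equiv 0$, proving uniqueness. The only delicate step is the penalty estimate: the physically relevant $f$ has polynomial growth rather than global Lipschitz regularity, so the comparison $|f(\d_1) - f(\d_2)| \lesssim G(t)|\w|$ must exploit the $L^2(0,T; H^2(\O))$ regularity baked into the uniqueness class to produce an $L^1$-in-time effective Lipschitz factor. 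Everything else is routine integration by parts.
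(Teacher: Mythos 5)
Your argument is correct, but it runs at a different energy level than the paper's. The paper multiplies the difference equation by $\Delta(\d_{1}-\d_{2})$ and derives the $H^{1}$-level inequality \eqref{D3} for $\|\nabla(\d_1-\d_2)\|_{L^2}^2$, controlling the penalty term via the uniform bound $\d\in L^{\infty}((0,T)\times\O)$ supplied by Lemma \ref{Lemma1} (i.e.\ the maximum principle under the structural hypothesis $\d\cdot f(\d)\ge 0$ for large $|\d|$), which makes $f$ effectively globally Lipschitz on the range of the solutions. You instead test with $\w=\d_1-\d_2$ itself, obtaining an $L^2$-level Gronwall inequality, and you replace the maximum-principle input by the Sobolev embedding $H^{2}(\O)\hookrightarrow L^{\infty}(\O)$, which yields only an $L^{2}$-in-time control of $\|\d_i(t)\|_{L^\infty}$ but still an $L^{1}(0,T)$ Gronwall coefficient. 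Both routes close; yours is more self-contained in that it does not invoke the sign condition on $f$, at the price of needing the local Lipschitz constant of $f$ to grow at most quadratically (satisfied by the Ginzburg--Landau penalty, and improvable by interpolating with the $L^\infty_t H^1_x$ bound). The paper's choice of multiplier is not gratuitous, however: the resulting inequality \eqref{D3} is exactly what is reused in Lemma \ref{Le3} to show that the solution operator $\u\mapsto\d[\u]$ is bounded and continuous into $Y=L^{2}(0,T;H^{2}_{0})\cap L^{\infty}(0,T;H^{1})$, which your $L^2$-level estimate would not directly provide.
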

\begin{proof} Let $\d_{1},\d_{2}$ be two solutions of \eqref{D1} with the same data, then we have
\begin{equation}\label{D2}(\d_{1}-\d_{2})_{t}+\u\cdot\nabla(\d_{1}-\d_{2})=\Delta(\d_{1}-\d_{2})-(f(\d_{1})-f(\d_{2})).\end{equation}
Multiplying \eqref{D2} by $\Delta(\d_{1}-\d_{2}),$ integrating it over $\O,$ and using integration  by parts and the Cauchy-Schwarz inequality, we obtain
\begin{equation}\begin{split}&\partial_{t}\int_{\O}|\nabla(\d_{1}-\d_{2})|^{2}dx+2\int_{\O}|\Delta(\d_{1}-\d_{2})|^{2}dx\\&=2\int_{\O}(\nabla(\d_{1}-\d_{2}))^{\top}\cdot(\Delta(\d_{1}-\d_{2}))\cdot\u dx +2\int_{\O}(f(\d_{1})-f(\d_{2}))(\Delta(\d_{1}-\d_{2}))dx
\\& \leq C\int_{\O}|\nabla(\d_{1}-\d_{2})|^{2}dx+\int_{\O}|\Delta(\d_{1}-\d_{2})|^{2}dx,\end{split}\end{equation}
where we used the fact that $f$ is smooth. Then
\begin{equation}\label{D3}\partial_{t}\int_{\O}|\nabla(\d_{1}-\d_{2})|^{2}dx+\int_{\O}|\Delta(\d_{1}-\d_{2})|^{2}dx\leq C\int_{\O}|\nabla(\d_{1}-\d_{2})|^{2}dx,\end{equation}
and Lemma \ref{Le2} follows  from Gr\"{o}nwall's inequality, the above inequality,  together with Lemma \ref{Lemma1}.
\end{proof}

\begin{Lemma} \label{Le3} Let $\O \subset \R^{3}$ be a bounded domain of class $C^{2+\nu},\quad \nu>0.$ Assume that $\u \in C([0,T];C^{2}_{0}(\bar{\O},\R^{3}))$ is a given velocity field. Then the solution operator
$$\u\longmapsto \d[\u]$$ assigns to $\u \in C([0,T];C^{2}_{0}(\bar{\O};\R^{3}))$ the unique solution $\d$ of \eqref{D1}. Moreover, the operator $\u \longmapsto \d[\u]$ maps bounded sets in $C([0,T];C^{2}_{0}(\bar{\O};\R^{3}))$ into bounded subsets of $$Y:=L^{2}([0,T];H^{2}_{0}(\O))\cap L^{\infty}([0,T];H^{1}(\O)),$$
and the mapping $$\u \in C([0,T];C^{2}_{0}(\bar{\O};\R^{3}))\longmapsto \d\in Y$$ is continuous on any bounded subsets of $C([0,T];C^{2}_{0}(\bar{\O};\R^{3})).$
\end{Lemma}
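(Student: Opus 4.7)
Uniqueness is already covered by Lemma \ref{Le2}, so three tasks remain: existence of a solution $\d[\u]\in Y$ for each admissible $\u$, uniform boundedness of $\d[\u]$ when $\u$ ranges over a bounded set $B\subset C([0,T];C^{2}_{0}(\bar{\O};\R^{3}))$, and continuous dependence on $\u$. I would begin by reducing the problem to homogeneous boundary data: since $\partial\O$ is of class $C^{2+\nu}$, there is a time-independent extension $\tilde{\d}_{0}\in H^{2}(\O)\cap C^{1}(\bar{\O})$ of the boundary trace $\d_{0}|_{\partial\O}$, and the shifted unknown $\w=\d-\tilde{\d}_{0}$ satisfies a semilinear parabolic equation with Dirichlet zero boundary condition and a smooth inhomogeneity depending on $\u$, $\tilde{\d}_{0}$ and $f$. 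The $L^{\infty}$-bound of Lemma \ref{Lemma1} makes the $f$-term Lipschitz on the relevant range, so existence of $\w$ follows from a standard Faedo--Galerkin scheme along the eigenbasis of $-\Delta$ with homogeneous Dirichlet conditions, combined with the a priori bounds derived below.

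To obtain the uniform $Y$-bound, I would first test the equation for $\w$ by $\w$ itself to get $L^{\infty}_{t}L^{2}_{x}\cap L^{2}_{t}H^{1}_{x}$ control, then test by $-\Delta\w$ to reach $H^{2}$. In the latter step, $\int_{\O}\u\cdot\nabla\w\,\Delta\w\,dx$ is bounded by $\tfrac{1}{4}\|\Delta\w\|_{L^{2}}^{2}+C\|\u\|_{L^{\infty}}^{2}\|\nabla\w\|_{L^{2}}^{2}$; the $f$-contribution is absorbed using the smoothness of $f$ together with the $L^{\infty}$ bound on $\d$ provided by Lemma \ref{Lemma1} (which depends only on the data and the constant $C_{0}$, not on $\u$); the shift terms involving $\tilde{\d}_{0}$ are fixed and contribute a constant. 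This yields
\begin{equation*}
\tfrac{d}{dt}\|\nabla\w\|_{L^{2}}^{2}+\|\Delta\w\|_{L^{2}}^{2}\le C(B)\bigl(1+\|\nabla\w\|_{L^{2}}^{2}\bigr),
\end{equation*}
and Gr\"onwall's inequality gives a bound on $\w$, hence on $\d=\w+\tilde{\d}_{0}$, in $L^{\infty}([0,T];H^{1}(\O))\cap L^{2}([0,T];H^{2}(\O))$ that depends only on $B$ and the initial-boundary data.

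For continuity, let $\u_{n}\to\u$ in $C([0,T];C^{2}_{0}(\bar{\O};\R^{3}))$ and set $W_{n}=\d[\u_{n}]-\d[\u]$. Since both solutions share the same boundary trace, $W_{n}|_{\partial\O}=0$, and
\begin{equation*}
(W_{n})_{t}+\u_{n}\cdot\nabla W_{n}-\Delta W_{n}=-\bigl(f(\d[\u_{n}])-f(\d[\u])\bigr)-(\u_{n}-\u)\cdot\nabla\d[\u].
\end{equation*}
Testing by $-\Delta W_{n}$ and using the mean value theorem on $f$ together with the $L^{\infty}$ bound on both $\d[\u_{n}]$ and $\d[\u]$ and the uniform $Y$-bound from the previous step (to bound $\|\nabla\d[\u]\|_{L^{2}}$), I expect
\begin{equation*}
\tfrac{d}{dt}\|\nabla W_{n}\|_{L^{2}}^{2}+\|\Delta W_{n}\|_{L^{2}}^{2}\le C\|\nabla W_{n}\|_{L^{2}}^{2}+C\|\u_{n}-\u\|_{C([0,T];C^{1})}^{2}.
\end{equation*}
Since $W_{n}(0,\cdot)=0$, Gr\"onwall gives $W_{n}\to 0$ in $Y$, which is exactly the required continuity on bounded subsets.

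The main technical obstacle is the careful treatment of the non-homogeneous boundary datum $\d_{0}$: without the shift $\tilde{\d}_{0}$ the $H^{2}$ estimate produces uncontrolled boundary integrals, and one must verify that the shifted equation still has a Lipschitz nonlinearity in the relevant range, which in turn relies crucially on the $L^{\infty}$-bound of Lemma \ref{Lemma1}. Once that bound is in place, the regularity of $f$ and the smoothness of $\u$ make every remaining term routine to control.
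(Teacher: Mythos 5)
Your proposal is correct and follows essentially the same route as the paper: uniqueness from Lemma \ref{Le2}, boundedness in $Y$ from the $H^1$/$H^2$ energy estimates with Gr\"onwall, and continuity by testing the equation for $\d[\u_{n}]-\d[\u]$ against its Laplacian and again applying Gr\"onwall, with the $L^{\infty}$ bound of Lemma \ref{Lemma1} and the smoothness of $f$ controlling the nonlinear term. The only difference is that you flesh out the existence step (boundary lifting plus Faedo--Galerkin) where the paper simply invokes standard parabolic theory.
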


\begin{proof} The uniqueness of the solution to \eqref{D1} is a consequence of Lemma \ref{Le3}, and the existence of a solution can be guaranteed by the standard parabolic equation theory. By \eqref{D3}, we can conclude that the solution operator $\u\longmapsto \d(\u)$ maps bounded sets in $C([0,T];C^{2}_{0}(\bar{\O};\R^{3}))$ into bounded subsets of the set $Y$.
Our next step is to show that the solution operator is continuous from any bounded subset of $C([0,T];C^{2}_{0}(\bar{\O}))$ to $Y$.
Let $\{\u_{n}\}_{n=1}^{\infty}$ be a bounded sequence in $C([0,T];C^{2}_{0}(\bar{\O}))$, that is to say, $\u_{n} \in B(0,R)\subset C([0,T];C^{2}_{0}(\bar{\O}))$ for some $R>0$, and
$$\u_{n}\to\u \text{ in } C([0,T];C^{2}_{0}(\bar{\O}))\quad \text{ as } n\to \infty.$$
Here, we denote $\d[\u]=\d$, and $\d[\u_{n}]=\d_{n}$,  so we have
\begin{equation}\begin{split}&\partial_{t}\int_{\O}\frac{1}{2}|\nabla(\d_{n}-\d)|^{2}dx+\int_{\O}|\Delta(\d_{n}-\d)|^{2}dx\\
&=\int_{\O}(\u\cdot\nabla\d-\u_{n}\cdot\nabla\d_{n})(\nabla(\d_{n}-\d))dx+\int_{\O}(f(\d)-f(\d_{n}))\cdot(\Delta(\d_{n}-\d))dx
\\& \leq \int_{\O}(|\u-\u_{n}|\cdot|\nabla\d|+|\u_{n}||\nabla(\d-\d_{n})|)|\Delta(\d_{n}-\d)|dx+C\int_{\O}|\nabla(\d_{n}-\d)|^{2}dx
\\& \leq \|\u_{n}-\u\|_{L^{\infty}}\|\nabla\d\|_{L^{2}}^{2}+C\|\nabla(\d-\d_{n})\|_{L^{2}}^{2}+\frac{1}{2}\int_{\O}|\Delta(\d_{n}-\d)|^{2}dx
\\& \leq C\|\u_{n}-\u\|_{L^{\infty}}+\frac{1}{2}\|\nabla(\d-\d_{n})\|_{L^{2}}^{2},\end{split}\end{equation}
where we used facts that $\d_{n}$ is bounded in $Y$ and $ f $ is smooth. This implies that
\begin{equation}\label{D4}\begin{split}
&\frac{1}{2}\partial_{t}\int_{\O}|\nabla(\d_{n}-\d)|^{2}dx+\frac{1}{2}\int_{\O}|\Delta(\d_{n}-\d)|^{2}dx
\\ & \leq C\|\u_{n}-\u\|_{L^{\infty}}+C\|\nabla(\d_{n}-\d)\|_{L^{2}}^{2}.
\end{split}
\end{equation}
Integrating \eqref{D4} over time $t \in (0,T)$, and then taking the upper limit over $n $ on the both sides, we get, noting that $\u_{n}\to\u \text{ in } C([0,T];C^{2}_{0}(\bar{\O});\R^{3}),$
\begin{equation}\begin{split}\label{D5}&\frac{1}{2}\lim_{n}\sup\int_{\O}|\nabla(\d_{n}-\d)|^{2}dx+\frac{1}{2}\lim_{n}\sup\int_{0}^{T}\int_{\O}|\Delta(\d_{n}-\d)|^{2}dxdt\\&\leq C \lim_{n}\sup\int_{0}^{T}\|\nabla(\d_{n}-\d)\|_{L^{2}}^{2}dt
\\ &\leq C\int_{0}^{T}\lim_{n}\sup\|\nabla(\d_{n}-\d)\|_{L^{2}}^{2}dt,\end{split}\end{equation}
thus, using Gr\"{o}nwall's inequality to \eqref{D5} and noting that $\d_{n},\,\d$ share the same initial data, we have
$$\lim_{n}\sup\int_{\O}|\nabla(\d_{n}-\d)|^{2}dx=0,$$
which means, from \eqref{D5} again,
$$\lim_{n}\sup\int_{0}^{T}\int_{\O}|\Delta(\d_{n}-\d)|^{2}dxdt=0.$$
Thus, we obtain
$$\d_{n}\to\d \text{ in } Y.$$
 This completes the proof of the continuity of the solution operator.
\end{proof}

\bigskip

\section{The Faedo-Galerkin Approximation Scheme}
In this section, we establish the existence of solution to the following approximation scheme:
\begin{subequations}\label{App1}
\begin{align}
&\rho_{t}+\Dv(\rho\u)=\varepsilon\Delta\rho\label{App1a},\\
&(\rho\u)_{t}+\Dv(\rho\u\otimes\u)+\nabla P(\rho)+\delta\nabla \rho^{\beta}+\varepsilon \nabla\u\cdot\nabla \rho \notag\\
&\qquad\qquad\qquad\qquad
 = \mu\Delta\u-\lambda\Dv\left(\nabla\d\odot\nabla\d-(\frac{1}{2}|\nabla\d|^{2}+F(\d)) I_{3}\right),\label{App1b}\\
& \d_{t}+\u\cdot\nabla\d=\Delta\d-f(\d),\label{App1c}
\end{align}\end{subequations}
with boundary conditions
\begin{subequations}\begin{align}
&\nabla\rho\cdot\nu|_{\partial\O}=0,\\
&\d|_{\partial\O}=\d_{0},\\
&\u|_{\partial\O}=0,
\end{align}\end{subequations}
together with modified initial data
\begin{subequations}\label{mid1}
\begin{align}
&\rho|_{t=0}=\rho_{0,\delta}(x),\\
&\rho\u|_{t=0}=\m_{0,\delta}(x),\\
&\d|_{t=0}=\d_{0}(x).
\end{align}\end{subequations}
Here the initial data $\rho_{0,\delta}(x)\in C^{3}(\overline{\O})$  satisfies the following conditions:
\begin{equation}
\label{App2a} 0<\delta\leq\rho_{0,\delta}(x)\leq \delta^{-\frac{1}{2\beta}},
\end{equation}
and \begin{equation}\label{App2b}
\rho_{0,\delta}(x)\to \rho \text{ in } L^{\gamma}(\O),\quad |\{\rho_{0,\delta}<\rho_{0}\}|\to 0 \quad \text{ as } \delta\to 0.
\end{equation}
Moreover,
\begin{equation}
\label{mid2}\m_{0,\delta}(x)=\begin{cases}\m_{0}\quad \text{ if } \rho_{0,\delta}(x) \geq \rho_{0}(x),\\
0 \quad\text{ if } \rho_{0,\delta}(x)<\rho_{0}(x).
\end{cases}\end{equation}

The density $\rho=\rho[\u]$ is determined uniquely as the solution of the following Neumann initial-boundary value problem (see Lemmas 2.1 and 2.2 of \cite{FNP}):
\begin{subequations}\label{App3}\begin{align}
&\rho_{t}+\Dv(\rho\u)=\varepsilon\Delta\rho ,\\
& \nabla\rho\cdot\nu|_{\partial\O}=0,\\
& \rho|_{t=0}=\rho_{0,\delta}(x),
\end{align}\end{subequations}

To solve \eqref{App1b} by a modified Faedo-Galerkin method, we need to introduce the finite-dimensional space
endowed with the $L^2$ Hilbert space structure:
$$X_{n}=span(\eta_{i})_{i=1}^{n}, \quad n\in\{1,2,3,\cdots\},$$
where the linearly independent functions $\eta_{i}\in \mathcal{D}(\O)^{3}$, $i=1,2, \dots$, form a  dense subset in $C_{0}^{2}(\overline{\O},\R^{3}).$
The approximate solution $\u_{n}$ should be given by the following form:
\begin{equation}\label{App4}\begin{split}&\int_{\O}\rho\u_{n}(\tau)\cdot\eta dx-\int_{\O}m_{0,\delta}\cdot\eta dx
\\&=\int_{0}^{\tau}\int_{\O}\left((\mu\Delta\u_{n}-\Dv(\rho\u_{n}\otimes\u_{n}))-\nabla(\rho^{\gamma}+\delta\rho^{\beta})-\varepsilon\nabla\rho\cdot\nabla\u_{n}\right)\cdot \eta dx dt\\
&-\int_{0}^{\tau}\int_{\O}\lambda\Dv(\nabla\d\odot\nabla\d-(\frac{1}{2}|\nabla\d|^{2}+F(\d))I_{3})\cdot\eta dxdt\end{split}\end{equation}
for any $t \in[0,T]$ and any $\eta\in X_{n}$, where $\varepsilon,\delta,\beta$ are fixed.
Due to Lemmas 2.1 and 2.2 of \cite{FNP} and our Lemmas \ref{Le2} and \ref{Le3}, the problem \eqref{D1}, \eqref{App3} and \eqref{App4} can be solved at least on a short time interval $(0,T_{n})$
with $T_{n}\leq T$ by a standard fixed point theorem on the Banach space $C([0,T],X_{n})$.
 We refer the readers to \cite{FNP} for more details. Thus we obtain a local solution $(\rho_{n},\u_{n},\d_{n})$ in time.

To obtain uniform bounds on $\u_{n}$, we derive an energy inequality similar to \eqref{EI2} as follows.
Taking $\eta=\u_{n}(t,x)$ with fixed $t$ in \eqref{App1} and repeating the procedure for a priori estimates in Section 2, we deduce a ``Kinetic energy equality":
\begin{equation}\begin{split}\label{App5}
&\partial_{t}\int_{\O}\left(\frac{1}{2}\rho_{n}|\u_{n}|^{2}
+\frac{1}{\gamma-1}\rho_{n}^{\gamma}+\frac{\delta}{\beta-1}\rho_{n}^{\beta}+\frac{\lambda}{2}|\nabla\d_{n}|^{2}+\lambda F(\d_{n})\right)dx
+\mu\int_{\O}|\nabla\u_{n}|^{2}dx
\\&+\lambda\int_{\O}|\Delta\d_{n}-f(\d_{n})|^{2}dx+\varepsilon\int_{\O}(\gamma\rho_{n}^{\gamma-2}
+\delta\beta\rho_{n}^{\beta-2})|\nabla\rho_{n}|^{2}dx = 0.
\end{split}\end{equation}
The uniform estimates obtained from \eqref{App5} furnish the possibility of repeating the above fixed point argument to extend the local solution
$\u_{n}$ to the whole time interval $[0,T].$ Then, by the solvability of equation \eqref{App3} and \eqref{D1}, we obtain the functions $(\rho_{n},\d_{n})$ on the whole time interval $[0,T].$

The next step in the proof of Theorem \ref{T1} is to pass the limit as $n\to \infty$ in the sequence of approximate solutions
$\{\rho_{n},\u_{n},\d_{n} \}$ obtained above. We observe that the terms related to $\u_{n}$ and $\rho_{n}$ can be treated similarly to \cite{FNP}. It remains to show the convergence of the terms related to $\d_{n}.$

By \eqref{App5}, smoothness of $f$, and elliptic estimates, we conclude
\begin{equation}
\label{+3}
\nabla\u_{n} \in L^{2}([0,T];L^{2}(\O)),
\end{equation}
\begin{equation}\label{+2}
\Delta\d_{n}-f(\d_{n}) \text{ is bounded in } L^{2}([0,T];L^{2}(\O)),
\end{equation}
and $$\d_{n}\in L^{\infty}([0,T];H^{1}_{0}(\O))\cap L^{2}([0,T]; H^{2}_{0}(\O)).$$
This yields that $$\Delta\d_{n}-f(\d_{n})\to \D\d-f(\d) \text{ weakly in }  L^{2}([0,T];L^{2}(\O)),$$
and  \begin{equation}\label{Dw}\d_{n}\to \d \text { weakly in }L^{\infty}([0,T];H^{1}_{0}(\O))\cap L^{2}([0,T]; H^{2}_{0}(\O)).\end{equation}
Using corollary 2.1 in \cite{F} and \eqref{App1c}, we can improve \eqref{Dw} as follows:
$$\d_{n}\to \d \quad  \text{ in } C([0,T];L^{2}_{weak}(\O)).$$
Next we need to rely on the following Aubin-Lions compactness lemma (see \cite{Lj}):

\begin{Lemma}\label{L+}
Let $X_{0}, X$ and $X_{1}$ be three Banach spaces with $X_{0}\subseteq X\subseteq X_{1}$. Suppose that $X_{0}$ is compactly embedded in $X$ and that $X$ is continuously embedded  in $X_{1}$; Suppose also that $X_{0}$ and $X_{1}$ are reflexive spaces. For $1 <p,q<\infty,$ let
$$W=\{u\in L^{p}([0,T];X_{0})|\frac{du}{dt} \in L^{q}([0,T];X_{1})\}.$$
Then the embedding of $W$ into $L^{p}([0,T];X)$ is also compact.
\end{Lemma}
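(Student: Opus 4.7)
\textbf{Proof plan for Lemma \ref{L+}.} The plan is to prove the classical Aubin--Lions compactness lemma by reducing the question to two ingredients: an Ehrling-type interpolation inequality between the three spaces, and a strong (not merely weak) convergence statement in the weakest space $X_1$ obtained from the uniform bound on the time derivative.

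First I would establish the following interpolation inequality: since $X_0 \hookrightarrow X$ compactly and $X \hookrightarrow X_1$ continuously, for every $\eta>0$ there exists $C_\eta>0$ such that
\begin{equation*}
\|v\|_X \le \eta \,\|v\|_{X_0} + C_\eta \,\|v\|_{X_1} \qquad \text{for every } v\in X_0.
\end{equation*}
This is proved by contradiction: a sequence violating the inequality can, by compactness, be made to converge in $X$ to a limit with zero $X_1$-norm but unit $X$-norm, a contradiction. Next, take any bounded sequence $\{u_n\}\subset W$. Since $X_0$ and $X_1$ are reflexive and $1<p,q<\infty$, the spaces $L^p([0,T];X_0)$ and $L^q([0,T];X_1)$ are reflexive, so after extracting a subsequence one has $u_n \rightharpoonup u$ in $L^p([0,T];X_0)$ and $\frac{du_n}{dt}\rightharpoonup \frac{du}{dt}$ in $L^q([0,T];X_1)$. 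By replacing $u_n$ with $u_n-u$ we may assume $u=0$, and the goal becomes to show $u_n\to 0$ strongly in $L^p([0,T];X)$.

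The main step — and the principal obstacle — is to upgrade the weak convergence to strong convergence of $\{u_n\}$ in $L^p([0,T];X_1)$. I would do this by a time-translation/mollification argument. For $h>0$, consider the averaged functions $u_n^h(t):=\frac{1}{h}\int_t^{t+h} u_n(s)\,ds$ (extended suitably near the endpoints). The uniform bound $\|\frac{du_n}{dt}\|_{L^q([0,T];X_1)}\le C$ yields, via $u_n(t+\tau)-u_n(t)=\int_t^{t+\tau}\frac{du_n}{ds}ds$ and Hölder in time, the equicontinuity estimate
\begin{equation*}
\|u_n(\cdot+\tau)-u_n(\cdot)\|_{L^q([0,T-\tau];X_1)}\le C\,\tau^{1-1/q},
\end{equation*}
so $\|u_n^h-u_n\|_{L^q([0,T];X_1)}\to 0$ as $h\to 0$, uniformly in $n$. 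For each fixed $h$ the averaged sequence $u_n^h$ is equicontinuous from $[0,T]$ to $X_1$, and at each fixed $t$ it lies in a bounded subset of $X_0$, hence (by compactness $X_0\hookrightarrow\hookrightarrow X\hookrightarrow X_1$) in a relatively compact subset of $X_1$. Arzelà--Ascoli then gives strong convergence of $u_n^h$ in $C([0,T];X_1)$ along a subsequence, and combining with the uniform bound on $\|u_n^h-u_n\|$ and a diagonal extraction produces $u_n\to 0$ strongly in $L^p([0,T];X_1)$.

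Finally I would assemble the two ingredients. Integrating the Ehrling inequality and using $(a+b)^p\le 2^{p-1}(a^p+b^p)$,
\begin{equation*}
\int_0^T\|u_n(t)\|_X^p\,dt \;\le\; 2^{p-1}\eta^p\int_0^T\|u_n(t)\|_{X_0}^p\,dt + 2^{p-1}C_\eta^{\,p}\int_0^T\|u_n(t)\|_{X_1}^p\,dt.
\end{equation*}
The first integral is bounded uniformly in $n$, so choosing $\eta$ small makes that term arbitrarily small; the second tends to $0$ by the previous step (after raising the exponent from $q$ to $p$ via interpolation with the uniform $L^\infty$--type control obtained from equicontinuity, or equivalently by repeating the mollification argument directly in $L^p$). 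A standard $\eta/C_\eta$ argument then yields $u_n\to 0$ in $L^p([0,T];X)$, proving the embedding $W\hookrightarrow L^p([0,T];X)$ is compact. The delicate point throughout is the passage from weak convergence in $L^p(X_0)$ plus a time-derivative bound in $L^q(X_1)$ to strong convergence in the intermediate space, for which the interpolation inequality and the time-translation estimate are both essential.
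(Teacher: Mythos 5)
Your argument is correct in outline, but note that the paper does not prove this lemma at all: it is quoted as the classical Aubin--Lions compactness lemma with a citation to \cite{Lj}, so there is no in-paper proof to compare against. What you have written is the standard textbook proof (Ehrling's interpolation inequality obtained by contradiction from the compact embedding $X_0\hookrightarrow X$, reduction by reflexivity to a sequence converging weakly to $0$, time-mollification plus the translation estimate coming from the $L^q(X_1)$ bound on $du_n/dt$, Arzel\`a--Ascoli for the mollified sequence, and finally the $\eta/C_\eta$ assembly), and all the steps you outline are sound. Two small points worth tightening if you write it out in full: in the Ehrling contradiction the limit has $X$-norm at least $\eta_0$ rather than exactly one, and the upgrade of the strong convergence from $L^q(X_1)$ to $L^p(X_1)$ should be justified explicitly via the uniform $C([0,T];X_1)$ bound on $u_n$ (which follows from $u_n(t)=u_n(s)+\int_s^t u_n'$ averaged in $s$) together with interpolation of Lebesgue exponents in time. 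With those details filled in, your proof is complete and in fact slightly more general than needed here, since the paper only applies the lemma with $p=q=2$ and the Hilbert triple $H^2\subset H^1\subset L^2$.
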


We are now applying the Aubin-Lions lemma to obtain the convergence of $\d_{n}$ and $\nabla\d_{n}$.
From Lemma \ref{Lemma1}, we have $$\d_{n}\in L^{\infty}((0,T)\times\O),$$
and \begin{equation}\label{+1}
\nabla\d_{n}\in L^{4}((0,T)\times\O).
\end{equation}

Using \eqref{App1c}, we have
\begin{equation*}
\begin{split}
\|\partial_{t}\d_{n}\|_{L^{2}(\O)}&\leq C\|\u_{n}\cdot\nabla\d_{n}\|_{L^{2}(\O)}+C\|\D\d_{n}-f(\d_{n})\|_{L^{2}(\O)}
\\& \leq C\|\u_{n}\|^{2}_{L^{4}(\O)}+C\|\nabla\d_{n}\|^{2}_{L^{4}(\O)}+C\|\D\d_{n}-f(\d_{n})\|_{L^{2}(\O)},
\\& \leq C\|\nabla \u_{n}\|^{2}_{L^{2}(\O)}+C\|\nabla\d_{n}\|^{2}_{L^{4}(\O)}+C\|\D\d_{n}-f(\d_{n})\|_{L^{2}(\O)},
\end{split}
\end{equation*}
where we used embedding inequality, the values of $C$ are variant. Thus, \eqref{+3}, \eqref{+2} and \eqref{+1} yield
\begin{equation*}
\|\partial_{t}\d_{n}\|_{L^{2}([0,T];L^{2}(\O))}\leq C.
\end{equation*}
Notice that $H^{2}\subset H^{1} \subset L^{2}$ and the injection $H^{2} \hookrightarrow H^{1}$ is compact, applying Lemma \ref{L+} we deduce that
the sequence $\{\d_{n}\}_{n=1}^{\infty}$ is  precompact in $L^{2}([0,T];H^{1}(\O)).$

Summing up the previous results, by taking a subsequence if necessary, we can assume that:
$$\d_{n}\to \d \quad  \text{ in } C([0,T];L^{2}_{weak}(\O)),$$
$$\d_{n}\to\d \text { weakly in } L^{2}(0,T;H^{2}(\O))\cap L^{\infty}(0,T;H^{1}(\O)),$$
$$\d_{n}\to\d \text { strongly in } L^{2}(0,T;H^{1}(\O)),$$
$$\nabla\d_{n}\to\nabla\d \text{ weakly in } L^{4}((0,T)\times\O),$$
$$\D\d_{n}-f(\d_{n})\to \D\d-f(\d) \text { weakly in } L^{2}(0,T;L^{2}(\O)),$$
$$F(\d_{n})\to F(\d) \text { strongly in } L ^{2}(0,T;H^{1}(\O)).$$
Now, we consider the convergence of the terms related to $\d_{n}$ and $\nabla\d_{n}$.
Let $\varphi$ be a test function, then
\begin{equation}\begin{split}\label{con1}&\int_{\O}(\nabla\d_{n}\odot\nabla\d_{n}-\nabla\d\odot\nabla\d)\cdot\nabla\varphi dxdt\\&\leq \int_{\O}(\nabla\d_{n}\odot\nabla\d_{n}-\nabla\d_{n}\odot\nabla\d)\nabla\varphi dxdt +\int_{\O}(\nabla\d_{n}\odot\nabla\d-\nabla\d\odot\nabla\d)\nabla\varphi dxdt\\&
\leq C\|\nabla\d_{n}\|_{L^{2}(\O)}\|\nabla\d_{n}-\nabla\d\|_{L^{2}(\O)}+C\|\nabla\d\|_{L^{2}(\O)}\|\nabla\d_{n}-\nabla\d\|_{L^{2}(\O)}\end{split}\end{equation}
By the strong convergence of $\nabla\d_{n}$ in $L^{2}(\O)$ and \eqref{con1}, we conclude that
$$\nabla\d_{n}\odot\nabla\d_{n}\to \nabla\d\odot\nabla\d \text{ in }\mathcal{D^{'}}(\O\times(0,T)).$$
Similarly,
$$\frac{1}{2}|\nabla\d_{n}|^{2}I_{3}\to \frac{1}{2}|\nabla\d|^{2}I_{3} \text{ in }\mathcal{D^{'}}(\O\times(0,T)),$$
and
$$\u_{n}\nabla\d_{n}\to\u\nabla\d \text{ in } \mathcal{D^{'}}(\O\times(0,T)),$$
where we used $$\u_{n}\to\u \text { weakly in } L^{2}([0,T];H^{1}_{0}(\O)).$$
Therefore, \eqref{D1} and \eqref{App4} hold at least in the sense of distribution. Moreover, by the uniform estimates on $\u, \d$ and \eqref{I1c}, we know that the map
$$t\to \int_{\O}\d_{n}(x,t)\varphi(x)dx \quad \text { for any } \varphi \in \mathcal{D}(\O),$$
is equi-continuous on $[0,T].$ By the Ascoli-Arzela Theorem, we know that
$$t\to \int_{\O}\d(x,t)\varphi(x)dx$$
is continuous for any $\varphi\in \mathcal{D}(\O).$ Thus, $\d$ satisfies the initial condition in \eqref{D1}.

Now we have the  existence of a global solution to \eqref{App1} as follows:
\begin{Proposition}\label{P1}
Assume that $\O \subset \R^{3}$ is a bounded domain  of the class $C^{2+\nu}$, $\nu >0$; and
 there exists a constant $C_{0}>0$, such that $ \d\cdot f(\d)\geq 0 \text{ for all } |\d|\geq C_{0}>0. $
Let $\varepsilon>0,\delta>0,$ and $\beta >\text{max}\{4,\gamma\}$ be fixed. Then for any given $T>0$, there is a solution $(\rho, \u,\d)$ to the initial-boundary value problem of \eqref{App1} in the following sense:

(1) The density $\rho$ is a nonnegative function such that
$$\rho \in L^{\gamma}([0,T];W^{2,r}(\O)),\quad \partial_{t}\rho \in L^{r}((0,T)\times\O),$$
for some $r >1$, the velocity $\u \in L^{2}([0,T];H^{1}_{0}(\O))$, and \eqref{App1a} holds almost everywhere on $(0,T)\times\O, $
and the initial and boundary data on $\rho$ are satisfied in the sense of traces. Moreover, the total mass is conserved, i.e.
$$\int_{\O}\rho(x,t)dx=\int_{\O}\rho_{\delta,0}dx,$$
for all $t \in [0,T];$ and the following inequalities hold
$$\delta\int_{0}^{T}\int_{\O}\rho^{\beta+1}dx dt\leq C(\varepsilon),$$
$$\varepsilon\int_{0}^{T}\int_{\O}|\nabla\rho|^{2}dxdt \leq C \text{ with } C \text{ independent of } \varepsilon.$$

(2) All quantities appearing in equation \eqref{App1b} are locally integrable, and the equation is satisfied in $\mathcal{D^{'}}(\O\times(0,T)).$ Moreover, $$\rho\u \in C([0,T];L^{^{\frac{2\gamma}{\gamma+1}}}_{weak}(\O)),$$ and $\rho\u$ satisfies the initial data.

(3) All terms in \eqref{App1c} are locally integrable on $\O\times(0,T)$. The direction $\d$ satisfies the equation \eqref{D1a} and the initial data \eqref{D1b} in the sense of distribution.

(4) The energy inequality
\begin{equation*}\begin{split}
&\partial_{t}\int_{\O}\left(\frac{1}{2}\rho|\u|^{2}
+\frac{1}{\gamma-1}\rho^{\gamma}+\frac{\delta}{\beta-1}\rho^{\beta}+\frac{\lambda}{2}|\nabla\d|^{2}+\lambda F(\d)\right)dx\\&+
\mu\int_{\O}|\nabla\u|^{2}dx+\lambda\int_{\O}|\D\d-f(\d)|^{2}dx\\&
\leq 0
\end{split}\end{equation*}
holds almost everywhere for $t \in[0,T]$.
\end{Proposition}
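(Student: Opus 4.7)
The plan is to realize the Faedo--Galerkin scheme sketched just before the statement and then pass to the limit $n\to\infty$. For each $n$, I would seek $\u_n\in C([0,T_n];X_n)$ satisfying the weak formulation \eqref{App4}, with $\rho_n=\rho[\u_n]$ defined by solving the parabolic Neumann problem \eqref{App3} (which is uniquely solvable by Lemmas 2.1--2.2 of \cite{FNP}) and $\d_n=\d[\u_n]$ defined by solving \eqref{D1} (Lemmas \ref{Le2}--\ref{Le3}). Substituting these into the right hand side of \eqref{App4} defines an operator on $C([0,T_n];X_n)$; its continuity follows from Lemma \ref{Le3} together with the standard continuity properties of the map $\u\mapsto \rho[\u]$, and its contractive behaviour on small time intervals gives a local solution by the Banach fixed point (or Schauder) theorem, exactly as in \cite{FNP}.

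Next I would derive the kinetic energy equality \eqref{App5} by taking $\eta=\u_n(t,\cdot)$ in \eqref{App4}, adding the $\rho^\gamma$ and $\rho^\beta$ contributions from \eqref{App3} (the extra term $\varepsilon\nabla\u\cdot\nabla\rho$ is precisely what is needed to close these identities), and exploiting the cancellations between \eqref{App1b} and \eqref{App1c} exactly as in the formal derivation of \eqref{EI2} in Section 2. This supplies uniform bounds on $\sqrt{\rho_n}\u_n$, $\rho_n$, $\nabla\u_n$, $\nabla\d_n$, $\Delta\d_n-f(\d_n)$ and the artificial pressure piece $\delta\rho_n^\beta$, which are more than enough to repeat the fixed point argument and extend the local solution to the whole interval $[0,T]$; the bound $\delta\int_0^T\!\!\int_\O \rho^{\beta+1}\,dx\,dt\le C(\varepsilon)$ follows from multiplying \eqref{App3} by $\rho^\beta$ and integrating, and mass conservation is a direct consequence of the Neumann condition in \eqref{App3}.

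With uniform estimates in hand, I would pass to $n\to\infty$. The arguments outlined in the paragraphs preceding the proposition already provide the weak convergence of $\u_n$, the strong convergence of $\rho_n$ (through the parabolic smoothing from $\varepsilon\Delta\rho$), and, for the direction field, the combination of Lemma \ref{Lemma1}, the Gagliardo--Nirenberg bound $\nabla\d_n\in L^4$, the equation \eqref{App1c} giving $\partial_t\d_n\in L^2(L^2)$, and the Aubin--Lions Lemma \ref{L+} to obtain $\d_n\to\d$ strongly in $L^2(0,T;H^1(\O))$. Using these one passes in \eqref{App4} and in the equations $\nabla\d_n\odot\nabla\d_n\to\nabla\d\odot\nabla\d$, $|\nabla\d_n|^2 I_3\to|\nabla\d|^2 I_3$ and $\u_n\cdot\nabla\d_n\to\u\cdot\nabla\d$ in $\mathcal{D}'$, as already carried out above. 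The attainment of the initial data for $\rho\u$ in $L^{2\gamma/(\gamma+1)}_{\rm weak}$ and for $\d$ in the trace sense follows from equi-continuity of $t\mapsto\int_\O\rho_n\u_n\cdot\eta$ and $t\mapsto\int_\O\d_n\cdot\varphi$ together with the Ascoli--Arzel\`a theorem. The energy inequality of item (4) is obtained by integrating \eqref{App5} in time and using the weak lower semicontinuity of the dissipation terms.

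The principal obstacle is the passage to the limit in the coupling term $\Dv(\nabla\d_n\odot\nabla\d_n)$: without the $L^4$ estimate on $\nabla\d_n$ from Lemma \ref{Lemma1} and the consequent Aubin--Lions strong compactness of $\d_n$ in $L^2(H^1)$, one cannot justify the convergence $\nabla\d_n\odot\nabla\d_n\to\nabla\d\odot\nabla\d$ in the sense of distributions, and the limiting equation for $\u$ would not close. The remaining verifications (parabolic regularity of $\rho$, distributional form of \eqref{App1b}, continuity of $\rho\u$ into $L^{2\gamma/(\gamma+1)}_{\rm weak}$, and the $C([0,T];L^2_{\rm weak})$ continuity of $\d$) are straightforward adaptations of the Navier--Stokes arguments in \cite{FNP}.
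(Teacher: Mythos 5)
Your proposal follows essentially the same route as the paper: local solvability via a fixed point argument on $C([0,T];X_n)$ combining the solvability of \eqref{App3} (Lemmas 2.1--2.2 of \cite{FNP}) and of \eqref{D1} (Lemmas \ref{Le2}--\ref{Le3}), extension to $[0,T]$ via the kinetic energy equality \eqref{App5}, and the limit $n\to\infty$ using the $L^4$ bound on $\nabla\d_n$ from Lemma \ref{Lemma1}, the $L^2(L^2)$ bound on $\partial_t\d_n$, and the Aubin--Lions Lemma \ref{L+} to get strong convergence of $\d_n$ in $L^2(0,T;H^1(\O))$ and hence distributional convergence of $\nabla\d_n\odot\nabla\d_n$ and the other coupling terms. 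You also correctly single out the same key difficulty the paper addresses, so the argument matches the paper's proof in both structure and substance.
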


To complete our proof of the main theorem, we will take vanishing artificial viscosity and vanishing artificial pressure in the following sections.

\bigskip

\section{Vanishing Viscosity Limit}

 In this section, we will pass the limit as $\varepsilon\to 0$ in the family of approximate solutions $(\rho_{\varepsilon},\u_{\varepsilon},\d_{\varepsilon})$ obtained in Section 4. The estimates in Proposition \ref{P1}
are independent of $n$, and those estimates are still valid for $(\rho_{\varepsilon},\u_{\varepsilon},\d_{\varepsilon})$. But, we need to remark that $\rho_{\varepsilon}$ will lose some regularity when $\varepsilon \to 0$ because the term $ \varepsilon \Delta\rho_{\varepsilon} $ goes away.  The space $L^{\infty}(0,T;L^{1}(\O))$ is a non-reflexive space, and the artificial pressure is bounded only in space $L^{\infty}(0,T;L^{1}(\O))$ from the estimates of Proposition \ref{P1}. It is crucial to establish the strong compactness of the density $\rho_{\varepsilon}$ for passing the limits. To this end, we need to obtain better estimates on the artificial pressure.

\subsection{Uniform estimates of the density}

We first introduce an operator
$$B:\left\{f\in L^{p}(\O):\int_{\O}fdx=0 \right\} \longmapsto[W^{1,p}_{0}(\O)]^{3}$$
which is a bounded linear operator satisfying
\begin{equation}\label{v1+}\|B[f]\|_{W^{1,p}_{0}(\O)}\leq c(p)\|f\|_{L^{p}(\O)}\quad \text { for any } 1<p<\infty,\end{equation}
where the function $W=B[f]\in \R^{3}$ solves the following equation:
$$\Dv W=f \text { in } \O,\quad W|_{\partial \O}=0.$$
 Moreover, if the function $f$ can be written in the form $ f =\Dv g $ for some $g \in L^{r}, $ and $g\cdot \nu|_{\partial{\O}}=0,$
then $$\|B[f]\|_{L^{r}(\O)}\leq c(r)\|g\|_{L^{r}(\O)}$$  for any $1<r<\infty.$
We refer the readers to \cite{F,FNP} for more background and discussion of the operator $B$.
Define the function:
$$\varphi(t,x)=\psi(t)B[\rho_{\varepsilon}-\widehat{\rho}],\quad \psi\in \mathcal{D}(0,T),\quad 0\leq \psi \leq 1,$$
where $$\widehat{\rho}=\frac{1}{|\O|}\int_{\O}\rho(t) dx.$$
Since $\rho_{\varepsilon}$ is a solution to \eqref{App1a}, by Proposition \ref{P2} and  $\beta>4$, we have
$$\rho_{\varepsilon}-\widehat{\rho}\in C([0,T],L^{4}(\O)).$$
Therefore, from \eqref{v1+}, we have $\varphi(t,x) \in C([0,T],W^{1,4}(\O)). $
In particular, $\varphi(t,x) \in C([0,T]\times\O)$ by the Sobolev embedding theorem. Consequently, $\varphi$ can be used as a test function for \eqref{App1b}.
After a little bit lengthy but straightforward computation, we obtain:
\begin{equation}\begin{split}
&\int_{0}^{T}\int_{\O}\psi(\rho_{\varepsilon}^{\gamma+1}+\delta\rho_{\varepsilon}^{\delta+1})dxdt\\
&=\widehat{\rho}\int_{0}^{T}\int_{\O}\psi(\rho_{\varepsilon}^{\gamma}+\delta\rho_{\varepsilon}^{\beta})dxdt+\int_{0}^{T}\int_{\O}\psi\rho_{\varepsilon}\u_{\varepsilon}B[\rho_{\varepsilon}-\widehat{\rho}]dxdt\\
&\quad +\mu\int_{0}^{T}\int_{\O}\psi\nabla \u_{\varepsilon}\nabla B[\rho_{\varepsilon}-\widehat{\rho}]dxdt\\
&\quad -\int_{0}^{T}\int_{\O}\psi\rho_{\varepsilon}\u_{\varepsilon}\otimes\u_{\varepsilon}\nabla B[\rho_{\varepsilon}-\widehat{\rho}]dxdt\\
&\quad -\varepsilon\int_{0}^{T}\int_{\O}\psi\rho_{\varepsilon}\u_{\varepsilon}B[\Delta\rho_{\varepsilon}]dxdt\\
&\quad -\int_{0}^{T}\int_{\O}\psi\rho_{\varepsilon}\u_{\varepsilon}B[\Dv(\rho_{\varepsilon}\u_{\varepsilon})]dxdt\\
&\quad +\varepsilon\int_{0}^{T}\int_{\O}\nabla\u_{\varepsilon}\nabla\rho_{\varepsilon}B[\rho_{\varepsilon}-\widehat{\rho}]dxdt\\
&\quad -\lambda\int_{0}^{T}\int_{\O}\left(\nabla\d_{\varepsilon}\otimes\nabla\d_{\varepsilon}-(\frac{|\nabla\d_{\varepsilon}|^{2}}{2}+F(\d))I_{3}\right)\psi \nabla B[\rho_{\varepsilon}-\widehat{\rho}]dxdt\\
&=\sum_{j=1}^{7}I_{j}.
\end{split}\end{equation}
To achieve our  lemma below, we need to estimate that the terms $I_{1}-I_{7}$ are bounded. We can treat the terms related to $\rho_{\varepsilon},\u_{\varepsilon}$ similar to \cite{FNP}.
It remains to estimate the term $I_{7}$. Indeed,

\begin{equation}\begin{split}
&\left|I_{7}\right|=
\left|\lambda\int_{0}^{T}\int_{\O}\left(\nabla\d_{\varepsilon}\otimes\nabla\d_{\varepsilon}-(\frac{|\nabla\d_{\varepsilon}|^{2}}{2}+F(\d))I_{3}\right)\psi \nabla B[\rho_{\varepsilon}-\widehat{\rho}]dxdt\right|\\
&\leq C\lambda\int_{0}^{T} \|\nabla\d_{\varepsilon}\|_{L^{4}(\O)}^{2}\|B[\rho_{\varepsilon}-\widehat{\rho}]\|_{W^{1,2}(\O)}dt+C\int_{0}^{T}\|B[\rho_{\varepsilon}-\widehat{\rho}]\|_{W^{1,2}(\O)}dt
\\& \leq C,
\end{split}\end{equation}
where we used $$\|B[\rho_{\varepsilon}-\widehat{\rho}]\|_{W^{1,2}(\O)}\leq C_{0}\|\rho_{\varepsilon}-\widehat{\rho}\|_{L^{2}(\O)},$$
 $$\nabla\d_{\varepsilon} \in L^{4}([0,T]\times\O),$$
and  $\beta\geq 4.$
Consequently, we have proved the following result:
\begin{Lemma}\label{Le4}
Let $(\rho_{\varepsilon},\u_{\varepsilon},\d_{\varepsilon})$ be the solutions of the problem \eqref{App1} constructed in Proposition \ref{P1}, then
$$\|\rho_{\varepsilon}\|_{L^{\gamma+1}((0,T)\times\O)}+\|\rho_{\varepsilon}\|_{L^{\beta+1}((0,T)\times\O)} \leq C,$$
where $ C $ is independent of $\varepsilon$.
\end{Lemma}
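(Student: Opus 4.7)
The plan is to estimate the pressure in stronger norms by testing the momentum equation \eqref{App1b} against a carefully chosen velocity-type test function built via the Bogovskii operator $B$. Specifically, take $\varphi(t,x) = \psi(t) B[\rho_\varepsilon - \widehat{\rho}]$ with $\psi \in \mathcal{D}(0,T)$, $0 \le \psi \le 1$, and $\widehat{\rho} = \tfrac{1}{|\Omega|}\int_\Omega \rho_\varepsilon(t)\,dx$. The motivation is that the pressure term $\int_\Omega \nabla(\rho^\gamma+\delta\rho^\beta)\cdot \varphi\,dx$, after integration by parts, produces $\int_\Omega \psi(\rho_\varepsilon^{\gamma+1}+\delta\rho_\varepsilon^{\beta+1})\,dx$ plus a harmless mean contribution, so that a uniform bound on all remaining terms yields the desired estimate.

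First I would verify that $\varphi$ is an admissible test function. From the continuity equation \eqref{App1a}, Proposition \ref{P1}, and the hypothesis $\beta > 4$, one obtains $\rho_\varepsilon - \widehat{\rho} \in C([0,T]; L^4(\Omega))$, and then the mapping property \eqref{v1+} of $B$ gives $\varphi \in C([0,T]; W^{1,4}(\Omega)) \hookrightarrow C([0,T]\times\overline{\Omega})$. Substituting $\varphi$ into \eqref{App1b} and rearranging, I decompose the resulting identity into seven pieces $I_1,\dots,I_7$ corresponding respectively to the convective contribution, viscosity, momentum flux, the $\varepsilon\Delta\rho$ cancellation, the coupling with the continuity equation, the $\varepsilon\nabla\u\cdot\nabla\rho$ term, and finally the liquid crystal stress $\nabla\d_\varepsilon\odot\nabla\d_\varepsilon - (\tfrac12|\nabla\d_\varepsilon|^2 + F(\d_\varepsilon))I_3$.

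The first six terms are handled exactly as in the Navier--Stokes setting of Feireisl--Novotn\'y--Petzeltov\'a \cite{FNP}: each is estimated using the energy bounds from Proposition \ref{P1}, the continuous embedding $W^{1,p}_0 \hookrightarrow L^q$ for appropriate exponents, and the $L^r$-boundedness of $B$ (plus the improved version when the argument is in divergence form with zero normal trace) to trade derivatives onto $B[\rho_\varepsilon-\widehat{\rho}]$. The only genuinely new term is $I_7$, the liquid crystal stress. Here the uniform $L^\infty_t L^2_x$ bound on $\nabla\d_\varepsilon$ alone would be too weak because one needs to pair it with $\nabla B[\rho_\varepsilon - \widehat{\rho}] \in L^\infty_t L^2_x$. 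However, the stronger estimate $\nabla\d_\varepsilon \in L^4((0,T)\times\Omega)$ from Lemma \ref{Lemma1} (derived via the Gagliardo--Nirenberg inequality together with the maximum-principle bound on $|\d_\varepsilon|$ furnished by the sign condition $\d\cdot f(\d) \ge 0$ for $|\d|\ge C_0$) is exactly the required ingredient: combining it with H\"older's inequality and $\|B[\rho_\varepsilon-\widehat{\rho}]\|_{W^{1,2}} \le C\|\rho_\varepsilon-\widehat{\rho}\|_{L^2}$ yields $|I_7| \le C$ uniformly in $\varepsilon$. The term $F(\d_\varepsilon)$ is controlled using $\d_\varepsilon \in L^\infty$.

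The main obstacle I anticipate is precisely the liquid crystal stress $I_7$: naively one loses control because $\nabla\d$ appears quadratically and $B[\rho_\varepsilon-\widehat{\rho}]$ only produces a $W^{1,2}$ gain. The plan hinges on the fact that Lemma \ref{Lemma1} upgrades the regularity of $\nabla\d_\varepsilon$ to $L^4((0,T)\times\Omega)$, which exactly closes the estimate. Once all seven terms are shown bounded, choosing $\psi$ approaching the indicator of $[0,T]$ and absorbing $\widehat{\rho}\int_0^T\int_\Omega\psi(\rho_\varepsilon^\gamma+\delta\rho_\varepsilon^\beta)$ (which is finite by conservation of mass and the energy estimate) yields the claimed uniform bound
\[
\|\rho_\varepsilon\|_{L^{\gamma+1}((0,T)\times\Omega)} + \|\rho_\varepsilon\|_{L^{\beta+1}((0,T)\times\Omega)} \le C,
\]
completing the proof.
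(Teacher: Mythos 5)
Your proposal follows essentially the same route as the paper: the same Bogovskii-type test function $\psi(t)B[\rho_\varepsilon-\widehat{\rho}]$, the same seven-term decomposition with the Navier--Stokes terms handled as in \cite{FNP}, and the same key observation that the liquid crystal stress term is closed by the $L^4((0,T)\times\Omega)$ bound on $\nabla\d_\varepsilon$ from Lemma \ref{Lemma1} paired with $\|B[\rho_\varepsilon-\widehat{\rho}]\|_{W^{1,2}}\le C\|\rho_\varepsilon-\widehat{\rho}\|_{L^2}$. The argument is correct and matches the paper's proof.
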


\subsection{ The vanishing viscosity limit passage}
From the previous energy estimates, we have
$$\varepsilon\D\rho_{\varepsilon}\to 0 \quad \text{ in } L^{2}(0,T;W^{-1,2}(\O))$$
and $$\varepsilon\nabla\u_{\varepsilon}\nabla\rho_{\varepsilon}\to 0 \quad \text { in } L^{1}(0,T;L^{1}(\O))$$
as $\varepsilon\to 0.$

Due to the above estimates so far, we may now assume that
\begin{subequations}\label{v0+}\begin{align}
&\rho_{\varepsilon}\to\rho \text{ in } C([0,T],L^{\gamma}_{weak}(\O)),\\
&\u_{\varepsilon}\to \u \text{ weakly in } L^{2}(0,T;W^{1,2}_{0}(\O)),\\
&\rho_{\varepsilon}\u_{\varepsilon}\to \rho\u \text{ in } C([0,T],L^{\frac{2\gamma}{\gamma+1}}_{weak}(\O)).
\end{align}
\end{subequations}
Then we can pass the limits of the terms related to $\rho_{\varepsilon},\u_{\varepsilon}$ similarly to \cite{FNP}.
 It remains to show the convergence of  $\d_{\varepsilon}$. Following the same arguments of Section 4, by taking a subsequence if necessary, we can assume that:
\begin{subequations}\label{v0}\begin{align}
&\d_{\varepsilon}\to \d \quad  \text{ in } C([0,T];L^{2}_{weak}(\O))\label{v0a}
\\ & \d_{\varepsilon}\to\d \text { weakly in } L^{2}(0,T;H^{2}(\O))\cap L^{\infty}(0,T;H^{1}(\O)),\label{v0b}\\
&\d_{\varepsilon}\to\d \text { strongly in } L^{2}(0,T;H^{1}(\O)),\label{v0c}\\
&\nabla\d_{\varepsilon}\to\nabla\d \text{ weakly in } L^{4}((0,T)\times\O),\label{v0d}\\
&\D\d_{\varepsilon}-f(\d_{\varepsilon})\to \D\d-f(\d) \text { weakly in } L^{2}(0,T;L^{2}(\O)),\label{v0e}\\
&F(\d_{\varepsilon})\to F(\d) \text { strongly in } L ^{2}(0,T;H^{1}(\O)).\label{v0f}
\end{align}
\end{subequations}
Consequently, letting $\varepsilon\to 0$ and making use of \eqref{v0+} and \eqref{v0}, we conclude that the limit of $(\rho_{\varepsilon},\u_{\varepsilon},\d_{\varepsilon})$ satisfies the following system:
\begin{subequations}\label{V1}\begin{align}
&\rho_{t}+\Dv(\rho\u)=0,\label{V1a}\\
&(\rho\u)_{t}+\Dv(\rho\u\otimes\u)+\nabla \bar{P}=\mu\Delta \u-\lambda\Dv(\nabla\d\odot\nabla\d-(\frac{1}{2}|\nabla\d|^{2}+F(\d))I_{3}),\label{V1b}\\
&\d_{t}+\u\cdot\nabla\d=\Delta\d-f(\d)\label{V1c}
\end{align}\end{subequations}
where $\bar{P}=\overline{a\rho_{\varepsilon}^{\gamma}+\delta\rho_{\varepsilon}^{\beta}},$ here $\overline K(x)$ stands for a weak limit of $\{K_{\varepsilon}\}$. 


\subsection{The strong convergence of the density}

 We observe that $\rho_{\varepsilon}, \u_{\varepsilon}$ is a strong solution of parabolic equation \eqref{App1a}, then the renormalized form can be written as

\begin{equation}\begin{split}\label{v2+}
&\partial_{t}b(\rho_{\varepsilon})+\Dv(b(\rho_{\varepsilon})\u_{\varepsilon})
+(b^{'}(\rho_{\varepsilon})\rho_{\varepsilon}-b({\rho_{\varepsilon}}))\Dv\u_{\varepsilon}\\&=\varepsilon\Dv(\chi_{\O}\nabla b(\rho_{\varepsilon}))-\varepsilon \chi_{\O}b^{''}(\rho_{\varepsilon})|\nabla\rho_{\varepsilon}|^{2}
\end{split}\end{equation}
in $D^{'}((0,T)\times \R^{3}),$ with $b\in C^{2}[0,\infty),\quad b(0)=0,$ and $b^{'},\quad b^{''}$ bounded functions and $b$ convex,
where $ \chi_{\O}$ is the characteristics function of $\O.$
By the virtue of \eqref{v2+} and the convexity of $b$, we have
\begin{equation*}
\int_{0}^{T}\int_{\O}\psi(b^{'}(\rho_{\varepsilon})\rho_{\varepsilon}-b(\rho_{\varepsilon})))\Dv\u_{\varepsilon}dxdt
\leq\int_{\O}b(\rho_{0,\delta})dx+\int_{0}^{T}\int_{\O}\partial_{t}\psi b(\rho_{\varepsilon})dxdt
\end{equation*}
for any $\psi \in C^{\infty}[0,T], \quad 0 \leq \psi \leq 1,\quad \psi(0)=1,\quad \psi(T)=0.$
Taking $b(z)=z \log z$ gives us the following estimate:
\begin{equation*}
\int_{0}^{T}\int_{\O}\psi\rho_{\varepsilon}\Dv \u_{\varepsilon}dxdt\leq\int_{\O}\rho_{0,\delta}\log(\rho_{0,\delta})dx
+\int_{0}^{T}\int_{\O}\partial_{t}\psi\rho_{\varepsilon}\log\rho_{\varepsilon}dxdt,
\end{equation*}
and letting $\varepsilon \to 0$ yields
\begin{equation*}
\label{V2''}
\int_{0}^{T}\int_{\O}\psi\overline{\rho\Dv\u}dxdt
\leq\int_{\O}\rho_{0,\delta}\log\rho_{0,\delta}dx+\int_{0}^{T}\int_{\O}\partial_{t}\psi\overline{\rho\log\rho}dxdt,
\end{equation*}
that is,
\begin{equation}
\label{V2}
\int_{0}^{T}\int_{\O}\overline{\rho\Dv \u}dxdt \leq\int_{\O}\rho_{0,\delta}\log\rho_{0,\delta}dx-\int_{\O}\overline{\rho\log\rho}(t)dx.
\end{equation}
Meanwhile, $(\rho, \u)$ satisfies
\begin{equation}
\label{V2similar}
\partial_{t}b(\rho)+\Dv(b(\rho)\u)+(b^{'}(\rho)\rho-b(\rho))\Dv\u=0.
\end{equation}
Using \eqref{V2similar} and $b(z)=z \log z$, we deduce the following inequality:
\begin{equation}
\label{V2+}
\int_{0}^{T}\int_{\O}\rho\Dv u dxdt \leq \int_{\O}\rho_{0,\delta}\log\rho_{0,\delta}dx-\int_{\O}\rho\log\rho(t)dx.
\end{equation}
From \eqref{V2+} and \eqref{V2}, we deduce that
\begin{equation}
\label{V2'}
\int_{\O}\overline{\rho\log\rho}-\rho\log(\rho)(\tau)dx\leq\int_{0}^{T}\int_{\O}\rho\Dv\u-\overline{\rho\Dv \u}dxdt
\end{equation}
for a.e. $\tau \in[0,T].$

To obtain the strong convergence of density $\rho_{\varepsilon}$, the crucial point is to get the weak continuity of the viscous pressure, namely:
\begin{Lemma}\label{Le6+}
Let $(\rho_{\varepsilon},\u_{\varepsilon})$ be the sequence of approximate solutions constructed in Proposition \ref{P1}, then
\begin{equation*}\begin{split}
&\lim_{\varepsilon\to 0^{+}}\int_{0}^{T}\int_{\O}\psi\eta(a\rho_{\varepsilon}^{\gamma}+
\delta\rho_{\varepsilon}^{\beta}-\mu\Dv\u_{\varepsilon})\rho_{\varepsilon}dxdt\\
&=\int_{0}^{T}\int_{\O}\psi\eta(\bar{P}-\mu\Dv\u)\rho dxdt\quad \text{ for any }\psi \in \mathcal{D}(0,T),\quad \eta \in \mathcal{D}(\O),
\end{split}\end{equation*}
where $\bar{P}=\overline{a\rho^{\gamma}+\delta\rho^{\beta}}.$
\end{Lemma}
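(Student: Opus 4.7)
The plan is to adapt the Lions--Feireisl strategy for the weak continuity of the effective viscous flux, testing the momentum equation by an inverse-divergence acting on the density. The new ingredient, compared with the compressible Navier--Stokes case, is the director-field contribution, which will be controlled by the strong convergence $\nabla\d_\varepsilon\to\nabla\d$ in $L^{2}((0,T)\times\O)$ recorded in \eqref{v0c}.

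Concretely, I would introduce the operator
$$\mathcal{A}[f](t,x) := \nabla\Delta^{-1}\bigl[\chi_{\O} f(t,\cdot)\bigr](x),$$
where $\Delta^{-1}$ denotes the Newtonian potential on $\R^{3}$, and take the test functions
$$\varphi_{\varepsilon}(t,x)=\psi(t)\eta(x)\,\mathcal{A}[\rho_{\varepsilon}],\qquad \varphi(t,x)=\psi(t)\eta(x)\,\mathcal{A}[\rho],$$
admissible in \eqref{App1b} and \eqref{V1b} respectively, thanks to the uniform bound $\rho_{\varepsilon}\in L^{\beta+1}((0,T)\times\O)$ from Lemma \ref{Le4} and the standard mapping properties of $\mathcal{A}$. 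The algebraic identities $\Dv\mathcal{A}[\rho_{\varepsilon}]=\rho_{\varepsilon}$ and $\Dv\mathcal{A}[\rho]=\rho$ are what make the pressure and viscosity pieces of the two momentum equations produce exactly the two sides of the claimed identity.

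It then remains to prove that every other term delivers the same limit on both sides. The time-derivative piece is rewritten via the continuity equations $\partial_{t}\rho_{\varepsilon}=-\Dv(\rho_{\varepsilon}\u_{\varepsilon})+\varepsilon\Delta\rho_{\varepsilon}$ and $\partial_{t}\rho=-\Dv(\rho\u)$, which introduces the Riesz-type operator $\mathcal{R}_{ij}:=\partial_{i}\Delta^{-1}\partial_{j}$ acting on $\rho_{\varepsilon}\u_{\varepsilon}$ and $\rho\u$. The residual $\varepsilon$-contributions $\varepsilon\Delta\rho_{\varepsilon}$ and $\varepsilon\nabla\rho_{\varepsilon}\cdot\nabla\u_{\varepsilon}$ vanish by the bounds recorded at the beginning of this section. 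For the director-field term
$$-\lambda\int_{0}^{T}\!\!\int_{\O}\Dv\!\bigl(\nabla\d_{\varepsilon}\odot\nabla\d_{\varepsilon}-(\tfrac12|\nabla\d_{\varepsilon}|^{2}+F(\d_{\varepsilon}))I_{3}\bigr)\cdot\bigl(\psi\eta\,\mathcal{A}[\rho_{\varepsilon}]\bigr)\,dxdt,$$
the strong convergence $\nabla\d_{\varepsilon}\to\nabla\d$ in $L^{2}((0,T)\times\O)$ from \eqref{v0c}, the strong convergence $F(\d_{\varepsilon})\to F(\d)$ in $L^{2}(0,T;H^{1})$ from \eqref{v0f}, and the strong convergence $\mathcal{A}[\rho_{\varepsilon}]\to\mathcal{A}[\rho]$ in $C([0,T];C(\overline{\O}))$ by compact Sobolev embedding together allow passage to the limit.

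The hardest step, exactly as in \cite{FNP}, is the convective contribution. After substituting the expression for $\partial_{t}\mathcal{A}[\rho_{\varepsilon}]$ and integrating by parts, the delicate surviving part reduces to a commutator-like quantity of the form
$$\int_{0}^{T}\!\!\int_{\O}\psi\eta\,\bigl[(\rho_{\varepsilon}\u_{\varepsilon})_{i}\,\mathcal{R}_{ij}[\rho_{\varepsilon}(u_{\varepsilon})_{j}]-\rho_{\varepsilon}(u_{\varepsilon})_{j}\,\mathcal{R}_{ij}[(\rho_{\varepsilon}\u_{\varepsilon})_{i}]\bigr]\,dxdt,$$
together with its analog for the limit variables. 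This is passed to the limit by the Div--Curl lemma (equivalently, the classical commutator identity for Riesz transforms), using $\rho_{\varepsilon}\to\rho$ in $C([0,T];L^{\gamma}_{\mathrm{weak}}(\O))$ and $\rho_{\varepsilon}\u_{\varepsilon}\to\rho\u$ in $C([0,T];L^{2\gamma/(\gamma+1)}_{\mathrm{weak}}(\O))$ from \eqref{v0+}, the weak convergence $\u_{\varepsilon}\to\u$ in $L^{2}(0,T;W^{1,2}_{0}(\O))$, and the assumption $\gamma>3/2$ to close the integrability in the product of the weak limits. Once this cancellation is established, collecting all the passages yields the identity.
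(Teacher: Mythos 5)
Your proposal follows essentially the same route as the paper: test \eqref{App1b} with $\psi\eta\,\nabla\Delta^{-1}[\rho_{\varepsilon}]$ and \eqref{V1b} with $\psi\eta\,\nabla\Delta^{-1}[\rho]$, defer the standard convective/commutator terms to the Feireisl--Novotn\'y--Petzeltov\'a argument via the Div--Curl lemma, and handle the new director-stress terms by combining the strong convergence $\nabla\d_{\varepsilon}\to\nabla\d$ in $L^{2}$ (together with the uniform $L^{4}$ bound on $\nabla\d_{\varepsilon}$ and the strong convergence of $F(\d_{\varepsilon})$) with the uniform convergence $A[\rho_{\varepsilon}]\to A[\rho]$ and the weak convergence of $\nabla A[\rho_{\varepsilon}]$ from the Mikhlin multiplier theorem. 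This matches the paper's proof of Lemma \ref{Le6+} in both structure and the key estimates.
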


\begin{proof}
We need to introduce a new operator $$A_{i}=\Delta^{-1}(\partial_{x_{i}}v), \; i=1,2,3,$$
where $\Delta^{-1}$ stands for the inverse of the Laplace operator on $\R^{3}.$ To be more specific, $A_{i}$ can be expressed by their Fourier symbol
$$A_{i}(\cdot)=\mathcal{F}^{-1}(\frac{-i\xi_{i}}{|\xi|^{2}}\mathcal{F}(\cdot)),\quad i=1,2,3,$$
with the following properties (see \cite{FNP}):
$$\|A_{i}v\|_{W^{1,s}(\O)}\leq c(s,\O)\|v\|_{L^{s}(\R^{3})},\quad 1<s<\infty, $$
$$\|A_{i}v\|_{L^{q}(\O)}\leq c(q,s,\O)\|v\|_{L^{s}(\R^{3})},\quad q<\infty,
\quad \text{ provided } \frac{1}{q}\geq\frac{1}{s}-\frac{1}{3},$$
and
$$\|A_{i}v\|_{L^{\infty}(\O)}\leq c(s,\O)\|v\|_{L^{s}(\R^{3})}\quad \text { if }s>3.$$

Next, we use the quantities$$\varphi(t,x)=\psi(t)\eta(x)A_{i}[\rho_{\varepsilon}],\quad \psi\in \mathcal{D}(0,T),\quad \eta \in \mathcal{D}(\O),\quad i=1,2,3,$$ as a test function for \eqref{App1b} to obtain
\begin{equation}\label{V3}\begin{split}
&\int_{0}^{T}\int_{\O}\varphi\eta((\rho_{\varepsilon}^{\gamma}+\delta\rho_{\varepsilon}^{\beta})-\mu\Dv \u_{\varepsilon})\rho_{\varepsilon}dxdt\\
&=\mu\int_{0}^{T}\int_{\O}\psi\nabla\u_{\varepsilon}\nabla\eta A[\rho_{\varepsilon}]dxdt-\int_{0}^{T}\int_{\O}\psi(\rho_{\varepsilon}^{\gamma}+\delta\rho_{\varepsilon}^{\beta})\nabla\eta A[\rho_{\varepsilon}]dxdt\\
&\quad -\int_{0}^{T}\int_{\O}\psi\rho_{\varepsilon}\u_{\varepsilon}\otimes\u_{\varepsilon}\nabla\eta A[\rho_{\varepsilon}]dxdt-\int_{0}^{T}\int_{\O}\psi_{t}\eta\rho_{\varepsilon}\u_{\varepsilon}A[\rho_{\varepsilon}]dxdt\\
&\quad -\varepsilon\int_{0}^{T}\int_{\O}\psi\eta\rho_{\varepsilon}\u_{\varepsilon}A[\Dv(\chi_{\O}\nabla\rho_{\varepsilon})]dxdt\\
&\quad +\varepsilon\int_{0}^{T}\int_{\O}\psi\eta\nabla\rho_{\varepsilon}\nabla\u_{\varepsilon}A[\rho_{\varepsilon}]dxdt
+\mu\int_{0}^{T}\int_{\O}\psi\u_{\varepsilon}\nabla\eta\rho_{\varepsilon}dxdt\\
&\quad -\mu\int_{0}^{T}\int_{\O}\psi\u_{\varepsilon}\nabla\eta\nabla A[\rho_{\varepsilon}]dxdt+\int_{0}^{T}\int_{\O}\psi\u_{\varepsilon}(\rho_{\varepsilon}R[\rho_{\varepsilon}\u_{\varepsilon}]
-\rho_{\varepsilon}\u_{\varepsilon}R[\rho_{\varepsilon}])dxdt\\
&\quad -\lambda\int_{0}^{T}\int_{\O}\left(\nabla\d_{\varepsilon}\odot\nabla\d_{\varepsilon}
-(\frac{1}{2}|\nabla\d_{\varepsilon}|^{2}+F(\d_{\varepsilon}))I_{3}\right)\psi\nabla\eta A[\rho_{\varepsilon}]dxdt\\
&\quad -\lambda\int_{0}^{T}\int_{\O}\left(\nabla\d_{\varepsilon}\odot\nabla\d_{\varepsilon}
-(\frac{1}{2}|\nabla\d_{\varepsilon}|^{2}+F(\d_{\varepsilon}))I_{3}\right)\psi\eta\nabla A[\rho_{\varepsilon}]dxdt
\end{split}\end{equation}
where $\chi_{\O}$ is the characteristics function of $\O$, $A[x]=\nabla\Delta^{-1}[x]$.

Meanwhile, we can use $$\varphi(t,x)=\psi(t)\eta(x)(\nabla\Delta^{-1})[\rho],\quad \psi\in \mathcal{D}(0,T),\quad \eta \in \mathcal{D}(\O),$$ as a test function for \eqref{V1b} to obtain
\begin{equation}\label{V4}\begin{split}
&\int_{0}^{T}\int_{\O}\varphi\eta(\bar{P}-\mu\Dv \u)\rho dxdt\\
&=\mu\int_{0}^{T}\int_{\O}\psi\nabla\u\nabla\eta A[\rho]dxdt-\int_{0}^{T}\int_{\O}\psi P\nabla\eta A[\rho]dxdt\\
&\quad -\int_{0}^{T}\int_{\O}\psi\rho\u\otimes\u\nabla\eta A[\rho]dxdt-\int_{0}^{T}\int_{\O}\psi_{t}\eta\rho\u A[\rho]dxdt\\
&\quad +\mu\int_{0}^{T}\int_{\O}\psi\u\nabla\eta\rho dxdt-\mu\int_{0}^{T}\int_{\O}\psi\u\nabla\eta\nabla A[\rho]dxdt\\
&\quad +\int_{0}^{T}\int_{\O}\psi\u(\rho R[\rho\u]-\rho\u R[\rho])dxdt\\
&\quad -\lambda\int_{0}^{T}\int_{\O}\left(\nabla\d\odot\nabla\d-(\frac{1}{2}|\nabla\d|^{2}+F(\d))I_{3}\right)\psi\nabla\eta A[\rho]dxdt\\
&\quad -\lambda\int_{0}^{T}\int_{\O}\left(\nabla\d\odot\nabla\d-(\frac{1}{2}|\nabla\d|^{2}+F(\d))I_{3}\right)\psi\eta\nabla A[\rho]dxdt.
\end{split}\end{equation}
For the related terms of $\rho_{\varepsilon},\u_{\varepsilon}$, following the same line in  \cite{FNP} we can show that these terms in \eqref{V3} converge  to their counterparts in \eqref{V4}. It remains to handle the terms related to $\d_{\varepsilon}$ in \eqref{V3}.
By virtue of the classical Mikhlin multiplier theorem (see  \cite{FNP}), we have
\begin{equation}
\label{v3+}
\nabla A[\rho_{\varepsilon}]\to \nabla A[\rho]\text{ in } C([0,T];L^{\beta}_{weak}(\O)) \quad \text{ as }\varepsilon \to 0,
\end{equation}
and
\begin{equation}
\label{v4+}
A[\rho_{\varepsilon}]\to A[\rho] \text { in } C(\overline{(0,T)\times\O))} \quad \text{ as }\varepsilon \to 0.
\end{equation}
Since
\begin{equation}
\begin{split}
\label{v5+}
& \int_{\O}\left|\nabla\d_{\varepsilon}\odot\nabla\d_{\varepsilon} A[\rho_{\varepsilon}]
-\nabla\d\odot\nabla\d A[\rho]\right|dx \\ &
\leq \int_{\O}|\nabla\d_{\varepsilon}|^{2}\left|A[\rho_{\varepsilon}]-A[\rho]\right|dx+\int_{\O}\left| \nabla\d_{\varepsilon}\right|\left|\nabla\d_{\varepsilon}
-\nabla\d\right||A[\rho]|dx\\
&\quad +\int_{\O}|\nabla\d|\left|\nabla\d_{\varepsilon}-\nabla\d\right||A[\rho]|dx,
\end{split}\end{equation}
using H\"{o}lder's inequality to \eqref{v5+}, by \eqref{v3+}, \eqref{v4+}, and \eqref{v0c} we have
\begin{equation*}
\int_{0}^{T}\int_{\O}(\nabla\d_{\varepsilon}\odot\nabla\d_{\varepsilon})\psi\nabla\eta A[\rho_{\varepsilon}]dxdt\to \int_{0}^{T}\int_{\O}(\nabla\d\odot\nabla\d)\psi\nabla\eta A[\rho]dxdt\quad \text{ as }\varepsilon\to 0.
\end{equation*}
Similarly,
\begin{equation*}
\int_{0}^{T}\int_{\O}(\frac{1}{2}|\nabla\d_{\varepsilon}|^{2}I_{3})\psi\nabla\eta A[\rho_{\varepsilon}]dxdt\to
\int_{0}^{T}\int_{\O}(\frac{1}{2}|\nabla\d|^{2}I_{3})\psi\nabla\eta A[\rho]dxdt\quad \text{ as }\varepsilon\to 0.
\end{equation*}
Using the strong convergence of $F(\d_{\varepsilon})$, we conclude that,
\begin{equation*}\begin{split}
&\lambda\int_{0}^{T}\int_{\O}\left(\nabla\d_{\varepsilon}\odot\nabla\d_{\varepsilon}-
(\frac{1}{2}|\nabla\d_{\varepsilon}|^{2}+F(\d_{\varepsilon}))I_{3})\right)\psi\nabla\eta A[\rho_{\varepsilon}]dxdt \\
&\to \lambda\int_{0}^{T}\int_{\O}\left(\nabla\d\odot\nabla\d-(\frac{1}{2}|\nabla\d|^{2}
+F(\d))I_{3})\right)\psi\nabla\eta A[\rho]dxdt\quad \text{ as }\varepsilon\to 0.
\end{split}\end{equation*}
And similarly, \begin{equation*}\begin{split}
&\lambda\int_{0}^{T}\int_{\O}\left(\nabla\d_{\varepsilon}\odot\nabla\d_{\varepsilon}
-(\frac{1}{2}|\nabla\d_{\varepsilon}|^{2}+F(\d_{\varepsilon}))I_{3})\right)\psi\eta\nabla A[\rho_{\varepsilon}]dxdt  \\
& \to \lambda\int_{0}^{T}\int_{\O}\left(\nabla\d\odot\nabla\d-(\frac{1}{2}|\nabla\d|^{2}+F(\d))I_{3})\right)\psi\eta\nabla A[\rho]dxdt \quad \text{ as } \varepsilon\to 0.
\end{split}\end{equation*}
So we deduce that
\begin{equation*}\label{V5}\begin{split}
&\lim_{\varepsilon\to 0^{+}}\int_{0}^{T}\int_{\O}\psi\eta(\rho_{\varepsilon}^{\gamma}+
\delta\rho_{\varepsilon}^{\beta}-\mu\Dv\u_{\varepsilon})\rho_{\varepsilon}dxdt\\
&=\int_{0}^{T}\int_{\O}\psi\eta(\bar{P}-\mu\Dv\u)\rho dxdt\quad \text{ for any }\psi \in \mathcal{D}(0,T),\quad \eta \in \mathcal{D}(\O),
\end{split}\end{equation*}
where $\bar{P}=\overline{\rho^{\gamma}+\delta\rho^{\beta}}.$
The proof of Lemma \ref{Le6+} is complete.
\end{proof}

From Lemma \ref{Le6+}, we have
\begin{equation}\label{V6} \int_{0}^{T}\int_{\O}\rho\Dv\u-\overline{\rho\Dv\u}dxdt\leq\frac{1}{\mu}\int_{0}^{T}\int_{\O}(\bar{P}\rho -\overline{a\rho^{\gamma+1}+\delta\rho^{\beta+1}})dxdt.
\end{equation}
By\eqref{V2'} and \eqref{V6}, we deduce that
\begin{equation*}
\int_{\O}\overline{\rho\log(\rho)}-\rho\log(\rho)(\tau)dx\leq \frac{1}{\mu}\int_{0}^{T}\int_{\O}(\bar{P}\rho-\overline{a\rho^{\gamma+1}+\delta\rho^{\beta+1}})dxdt,
\end{equation*}
and $$\bar{P}\rho-\overline{\rho^{\gamma+1}+\delta\rho^{\beta+1}}\leq 0$$ due to the convexity of $\rho^{\gamma}+\delta\rho^{\beta}$.
So $$\int_{\O}\overline{\rho\log(\rho)}-\rho\log(\rho)(t)dx\leq 0.$$
On the other hand, $$\overline{\rho\log(\rho)}-\rho\log(\rho)\geq 0.$$
Consequently $\overline{\rho\log(\rho)}=\rho\log(\rho)$ that means
$$\rho_{\varepsilon}\to \rho \text{ in } L^{1}((0,T)\times\O).$$
Thus, we can pass to the limit as $\varepsilon \to 0$ to obtain the following result:

\begin{Proposition}\label{P2}
Assume $\O\subset \R^{3}$ is a bounded domain of class $C^{2+\vartheta},  \vartheta>0.$
 If there exists a constant $C_{0}>0,$ such that $ \d\cdot f(\d)\geq 0 \text{ for all } |\d|\geq C_{0}>0. $
 let $\delta>0$, and $$\beta>\max\left\{4,\frac{6\gamma}{2\gamma-3}\right\}$$
 be fixed. Then,  for any given $T>0$, there exists a finite energy weak solution $ (\rho,\u,\d)$ of the problem:
\begin{subequations}\label{V7}\begin{align}&\rho_{t}+\Dv(\rho\u)=0,\label{V7a}\\
&(\rho\u)_{t}+\Dv(\rho\u\otimes\u)+\nabla(\rho^{\gamma}+\delta\rho^{\beta}) \notag \\
&\qquad\qquad\qquad\qquad
 =\mu\Delta \u-\lambda\Dv\left(\nabla\d\odot\nabla\d-(\frac{1}{2}|\nabla\d|^{2}+F(\d))I_{3}\right),\label{V7b}\\
&\d_{t}+\u\cdot\nabla\d=\Delta\d-f(\d)\label{V7c}\end{align}\end{subequations}
with the boundary condition $\u|_{\partial\O} =0,\quad \d|_{\partial \O} =\d_{0}$ and initial condition \eqref{I2}. Moreover, $\rho\in L^{\beta+1}((0,T)\times\O)$ and the equation \eqref{V7a} holds in the sense of renormalized solutions on $D'((0,T)\times \R^{3})$ provided $\rho,\u$ were prolonged to be zero on $\R^{3}\setminus\O$.
Furthermore, $(\rho,\u,\d) $ satisfies the following uniform estimates:
\begin{equation}\label{V8}
\sup_{t\in[0,T]}\|\rho(t)\|_{L^{\gamma}(\O)}^{\gamma}\leq C E_{\delta}[\rho_{0},\m_{0},\d_{0}],
\end{equation}
\begin{equation}\label{V9}
\delta \sup_{t\in[0,T]}\|\rho(t)\|_{L^{\beta}(\O)}^{\beta}\leq C E_{\delta}[\rho_{0},\m_{0},\d_{0}],
\end{equation}
\begin{equation}\label{V10}
\sup_{t\in[0,T]}\|\sqrt{\rho(t)}\u(t)\|_{L^{2}(\O)}^{2}\leq C E_{\delta}[\rho_{0},\m_{0},\d_{0}],
\end{equation}
\begin{equation}\label{V11}
\|\u(t)\|_{L^{2}([0,T];H^{1}_{0}(\O))}\leq C E_{\delta}[\rho_{0},\m_{0},\d_{0}],
\end{equation}
\begin{equation}\label{V12}
\sup_{t\in[0,T]}\|\nabla\d\|_{L^{2}(\O)}^{2}\leq C E_{\delta}[\rho_{0},\m_{0},\d_{0}],
\end{equation}
\begin{equation}\label{V13}
\|\d\|_{L^{2}([0,T];H^{2}_{0}(\O))}\leq C E_{\delta}[\rho_{0},\m_{0},\d_{0}],
\end{equation}
where $C$ is independent of $\delta>0$ and
$$E_{\delta}[\rho_{0},\m_{0},\d_{0}]=\int_{\O}\left(\frac{1}{2}\frac{|\m_{0,\delta}|^{2}}{\rho_{0,\delta}}
+\frac{1}{\gamma-1}\rho_{0,\delta}^{\gamma}+\frac{\delta}{\beta-1}\rho_{0,\delta}^{\beta}+\frac{\lambda}{2}|\nabla\d_{0}|^{2}
+\lambda F(\d_{0})\right)dx.$$
\end{Proposition}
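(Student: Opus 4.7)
My plan is to assemble the machinery developed earlier in this section. I start from the family $(\rho_\varepsilon,\u_\varepsilon,\d_\varepsilon)$ produced by Proposition \ref{P1}; its energy inequality yields bounds uniform in both $\varepsilon$ and $\delta$ for $\sqrt{\rho_\varepsilon}\,\u_\varepsilon$ in $L^\infty(0,T;L^2(\O))$, $\rho_\varepsilon$ in $L^\infty(0,T;L^\gamma(\O))$, $\delta^{1/\beta}\rho_\varepsilon$ in $L^\infty(0,T;L^\beta(\O))$, $\u_\varepsilon$ in $L^2(0,T;H^1_0(\O))$, and $\d_\varepsilon$ in $L^\infty(0,T;H^1(\O))\cap L^2(0,T;H^2(\O))$. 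Invoking Lemma \ref{Le4} upgrades the pressure to $\|\rho_\varepsilon\|_{L^{\beta+1}((0,T)\times\O)}\le C$, which in turn forces $\varepsilon\Delta\rho_\varepsilon\to 0$ in $L^2(0,T;W^{-1,2}(\O))$ and $\varepsilon\nabla\u_\varepsilon\cdot\nabla\rho_\varepsilon\to 0$ in $L^1((0,T)\times\O)$. Combining \eqref{v0+} with the strong convergence of $\nabla\d_\varepsilon$ from \eqref{v0c}, I can pass to the limit in each term of \eqref{App1a}--\eqref{App1c}, arriving at \eqref{V1a}--\eqref{V1c} with $\bar P=\overline{a\rho^\gamma+\delta\rho^\beta}$ in place of the true pressure.

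The main obstacle is then identifying $\bar P$ with $a\rho^\gamma+\delta\rho^\beta$, equivalently, upgrading $\rho_\varepsilon\rightharpoonup\rho$ to strong convergence in $L^1$. My plan is to apply the Lions--Feireisl strategy. First, the effective viscous flux identity of Lemma \ref{Le6+} (obtained by testing \eqref{App1b} against $\psi(t)\eta(x)A_i[\rho_\varepsilon]$ with $A_i=\Delta^{-1}\partial_{x_i}$) produces $\overline{(a\rho^\gamma+\delta\rho^\beta-\mu\Dv\u)\rho}=(\bar P-\mu\Dv\u)\rho$; the nontrivial new check here is that the liquid-crystal stress $\nabla\d_\varepsilon\odot\nabla\d_\varepsilon-(\tfrac12|\nabla\d_\varepsilon|^2+F(\d_\varepsilon))I_3$ passes to the limit in the integrals involving $A_i[\rho_\varepsilon]$ and $\nabla A_i[\rho_\varepsilon]$, which works thanks to the strong $L^2_{t,x}$ convergence of $\nabla\d_\varepsilon$ and the continuity of $A_i$. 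Next, the parabolic smoothing of \eqref{App1a} delivers the renormalized identity \eqref{v2+} for $b(z)=z\log z$, and sending $\varepsilon\to 0$ gives \eqref{V2}; the DiPerna--Lions theory applied to the limiting continuity equation (justified by $\rho\in L^{\beta+1}$) provides the matching identity \eqref{V2+}. Subtracting yields \eqref{V2'}, and substituting the effective viscous flux identity on the right-hand side together with the convexity of $z\mapsto z^\gamma+\delta z^\beta$ forces $\int_\O(\overline{\rho\log\rho}-\rho\log\rho)(\tau)\,dx\le 0$; since the reverse inequality is automatic by convexity of $z\log z$, I conclude that $\rho_\varepsilon\to\rho$ strongly in $L^1((0,T)\times\O)$.

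With strong convergence in hand, $\bar P=a\rho^\gamma+\delta\rho^\beta$ follows, \eqref{V7a} holds in the renormalized sense on $\R^3$ (after zero-extension of $\rho$ and $\u$) by DiPerna--Lions, and the uniform bounds \eqref{V8}--\eqref{V13} are read off from the energy inequality by weak lower semicontinuity of norms under weak convergence. The restriction $\beta>6\gamma/(2\gamma-3)$ enters at exactly one point, namely in controlling the convective term $\rho|\u|^2$ uniformly in $L^1$ via Sobolev embedding after interpolating $\rho$ between $L^\gamma$ and $L^\beta$, which is what makes the passage to the limit of $\Dv(\rho\u\otimes\u)$ in the distributional sense legitimate.
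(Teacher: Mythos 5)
Your proposal is correct and follows essentially the same route as the paper: uniform pressure bounds via the Bogovski\u{\i}-type operator $B$ (Lemma \ref{Le4}), passage to the limit with $\bar P$ a weak limit, the effective viscous flux identity of Lemma \ref{Le6+} via the operators $A_i=\Delta^{-1}\partial_{x_i}$, and the $z\log z$ renormalization plus convexity to force $\overline{\rho\log\rho}=\rho\log\rho$ and hence strong $L^1$ convergence of $\rho_\varepsilon$. The only cosmetic difference is that you localize the role of $\beta>6\gamma/(2\gamma-3)$ in the convective term, whereas the paper leaves this condition implicit by deferring to \cite{FNP}; this does not affect the argument.
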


\begin{Remark}\label{R51}
Recalling the modified initial data \eqref{mid1}-\eqref{mid2}, we conclude that the modified energy $E_{\delta}[\rho_{0},\m_{0},\d_{0}]$ is bounded, and consequently the estimates in Proposition \ref{P2} hold independently of $\delta$.
\end{Remark}
\bigskip

\section{Passing to the Limit in the Artificial Pressure Term}

The objective of this section is to recover the original system by vanishing the artificial pressure term.  Again in this part the crucial issue is to recover the strong convergence for $\rho_{\delta} $ in $L^{1}$ space.

\subsection{Better estimate of density}
Let us begin with a renormalized continuity equation
\begin{equation*}
b(\rho_{\delta})_{t}+\Dv(b(\rho_{\delta})\u_{\delta})+(b'(\rho_{\delta})\rho_{\delta}-b(\rho_{\delta}))\Dv\u_{\delta}=0 \quad \text { in } \mathcal{D'}((0,T)\times \R^{3})
\end{equation*}
for any uniformly bounded function $ b \in C^{1}[0,\infty).$
We can regularize the above equation as
\begin{equation}\label{Limit1}
\partial_{t}S_{m}[b(\rho)]+\Dv(S_{m}[b(\rho)]\u)+S_{m}[(b'(\rho)\rho-b(\rho))\Dv\u]=q_{m} \quad \text{ on } (0,T)\times \R^{3},
\end{equation}
where $S_{m}(v) $ denotes a spatial convolution with a family of regularizing kernels,
and
$$q_{m}\to 0 \text{ in } L^{2}(0,T;L^{2}(\R^{3})) \text { as } m\to \infty,$$
provided $b$ is uniformly bounded (see details in \cite{FNP}).

We use the operator $B$ to construct multipliers of the form
$$\varphi(t,x)=\psi(t)B[S_{m}[b(\rho_{\delta})]-\frac{1}{|\O|}\int_{\O}S_{m}[b(\rho_{\delta})]dx],
 \quad \psi \in \mathcal{D}(0,T),\quad 0 \leq \psi\leq 1,$$
where the operator $B$ was defined in Section 5.
Taking $b(\rho_{\delta})=\rho_{\delta}^{\sigma},$
 using \eqref{Limit1} and \eqref{V8}, with $\sigma $ small enough, we see that  $$S_{m}[\rho_{\delta}^{\sigma}]-\frac{1}{|\O|}\int_{\O}S_{m}[\rho_{\delta}^{\sigma}]dx$$
is in the space $C([0,T];L^{p}(\O))$ for any finite $p>1.$
 By \eqref{v1+} and the embedding theorem, we have $\varphi(t,x) \in C([0,T]\times\O)$. Consequently, $\varphi(t,x) $
can be used as a test function for \eqref{V7b}, then one arrives at the following formula:
\begin{equation*}\begin{split}
&\int_{0}^{T}\int_{\O}\psi(\rho_{\delta}^{\gamma}+\delta\rho_{\delta}^{\beta})S_{m}[\rho_{\delta}^{\sigma}]dxdt\\
&=\int_{0}^{T}\psi(t)\left(\int_{\O}(\rho_{\delta}^{\gamma}+
\delta\rho_{\delta}^{\beta})dx)(\frac{1}{|\O|}\int_{\O}S_{m}[\rho_{\delta}^{\sigma}]dx\right)dt
\\&\quad -\int_{0}^{T}\int_{\O}\psi_{t}\rho_{\delta}\u_{\delta}B[S_{m}[\rho_{\delta}^{\sigma}]-\frac{1}{|\O|}\int_{\O}S_{m}[\rho_{\delta}^{\sigma}]dx]dxdt\\
&\quad +\int_{0}^{T}\int_{\O}\psi(\mu\psi\nabla\u_{\delta}-\rho_{\delta}\u_{\delta}\otimes\u_{\delta})\nabla B[S_{m}[\rho_{\delta}^{\sigma}]-\frac{1}{|\O|}\int_{\O}S_{m}[\rho_{\delta}^{\sigma}]dx]dxdt\\
&\quad +\int_{0}^{T}\int_{\O}\psi\rho_{\delta}\u_{\delta}B[S_{m}(\rho_{\delta}^{\sigma}-\sigma\rho_{\delta}^{\sigma})\Dv\u
-\frac{1}{|\O|}\int_{\O}S_{m}[(\rho_{\delta}^{\sigma}-\sigma\rho_{\delta}^{\sigma})\Dv\u_{\delta}]dx]dxdt\\
&\quad -\int_{0}^{T}\int_{\O}\psi\rho_{\delta}\u_{\delta}B[\Dv S_{m}[(\rho_{\delta}^{\sigma}\u_{\delta})]]dxdt
 +\int_{0}^{T}\int_{\O}\psi\rho_{\delta}\u_{\delta}B[q_{m}-\frac{1}{|\O|}\int_{\O}q_{m}dx]dxdt\\
&\quad +\lambda\int_{0}^{T}\int_{\O}\psi\left(\nabla\d_{\delta}\odot\nabla\d_{\delta}-(\frac{1}{2}|\nabla\d_{\delta}|^{2}+F(\d_{\delta}))I_{3}\right)\nabla B[S_{m}[\rho_{\delta}^{\sigma}]-\frac{1}{|\O|}\int_{\O}S_{m}[\rho_{\delta}^{\sigma}]dx]dxdt \\
&=\sum_{i=1}^{6}I_{i}+\int_{0}^{T}\int_{\O}\psi\rho_{\delta}\u_{\delta}B[q_{m}-\frac{1}{|\O|}\int_{\O}q_{m}dx]dxdt.
\end{split}\end{equation*}
Noting that $q_{m}\to 0 \text{ in } L^{2}(0,T;L^{2}(\R^{3})) \text { as } m\to \infty$, we can pass to the limit for $m
\to \infty$ in the above equality to get the following:
$$\int_{0}^{T}\int_{\O}\psi(\rho_{\delta}^{\gamma+\sigma}+\delta\rho_{\delta}^{\beta+\sigma})dxdt\leq \sum_{i=1}^{6}|I_{i}|.$$

Now, we can estimate the integrals $I_{1}-I_{6}$ as follows.
\\(1) We see that \begin{equation*}
I_{1}=\int_{0}^{T}\psi(t)\left(\int_{\O}(a\rho_{\delta}^{\gamma}
+\delta\rho_{\delta}^{\beta})dx)(\frac{1}{|\O|}\int_{\O}S_{m}(\rho_{\delta}^{\sigma})dx\right)dt
\end{equation*}
is bounded uniformly in  $\delta$ provided $\sigma \leq \gamma$ by \eqref{V8} and \eqref{V9}.
\\ (2) As for the second term, by \eqref{V8}, \eqref{V10}, \eqref{V11} and together with the embedding $W^{1,p}(\O)\hookrightarrow L^{\infty}(\O)$ for $p>3,$ we have
\begin{equation*}\begin{split}
|I_{2}|&=\left|\int_{0}^{T}\int_{\O}\psi_{t}\rho_{\delta}\u_{\delta}B[S_{m}(\rho_{\delta}^{\sigma}) -\frac{1}{|\O|}\int_{\O}S_{m}(\rho_{\delta}^{\sigma})dx]dxdt\right|\\& \leq c \int_{0}^{T}|\psi_{t}|dt\leq C
\end{split}\end{equation*}
provided $\sigma \leq\frac{\gamma}{3}$.
\\(3) Similarly, for the third term, we have
\begin{equation*}
\begin{split}
|I_{3}|&=\left|\int_{0}^{T}\int_{\O}\psi(\mu\psi\nabla\u_{\delta}-\rho_{\delta}\u_{\delta}\otimes\u_{\delta})\nabla B[S_{m}(\rho_{\delta}^{\sigma})-\frac{1}{|\O|}\int_{\O}S_{m}(\rho_{\delta}^{\sigma})dx]dxdt\right|\\&\leq C
\end{split}\end{equation*}
if we choose $\sigma\leq \frac{\gamma}{2}$;
\\ (4) For $I_{4}$, by H\"{o}lder inequality, we have
\begin{equation*}\begin{split}
|I_{4}|&=\left|\int_{0}^{T}\int_{\O}\psi\rho_{\delta}\u_{\delta}B[S_{m}(\rho_{\delta}^{\sigma}-\sigma\rho_{\delta}^{\sigma})\Dv\u
-\frac{1}{|\O|}\int_{\O}S_{m}(\rho_{\delta}^{\sigma}-\sigma\rho_{\delta}^{\sigma})\Dv\u_{\delta}dx]dxdt\right|\\&
\leq C\int_{0}^{T}\|\rho_{\delta}\|_{L^{\gamma}(\O)}\|\u_{\delta}\|_{L^{6}(\O)}\|\rho_{\delta}^{\theta}\Dv\u_{\delta}\|_{L^{q}(\O)}dt,
\end{split}\end{equation*}
where $$p=\frac{6\gamma}{5\gamma-6},\quad\quad q=\frac{6\gamma}{7\gamma-6}.$$
 If we choose $\sigma\leq \frac{2\gamma}{3}-1$, and use \eqref{V8}, \eqref{V9} and \eqref{V11}, we conclude that $I_{4}$ is uniformly bounded.
\\ (5) Using the embedding inequality, we have
\begin{equation*}\begin{split}
|I_{5}|&=\left|\int_{0}^{T}\int_{\O}\psi\rho_{\delta}\u_{\delta}B[\Dv S_{m}(\rho_{\delta}^{\sigma}\u_{\delta})]dxdt\right|\\&\leq \int_{0}^{T}\|\rho_{\delta}\|_{L^{\gamma}}\|\u_{\delta}\|_{L^{6}}\|\rho_{\delta}^{\sigma}\u_{\delta}\|_{L^{p}}dt
\\ &\leq
C\int_{0}^{T}\|\rho_{\delta}\|_{L^{\gamma}}\|\u_{\delta}\|_{L^{6}}^{2}\|\rho_{\delta}^{\sigma}\|_{L^{r}}dt,
\end{split}\end{equation*}
where $r=\frac{3\gamma}{2\gamma-3}.$
If we choose $\sigma \leq \frac{2\gamma}{3}-1,$ and use \eqref{V8}, \eqref{V9} and \eqref{V11}, then $I_{5}$ is bounded.
\\(6) Finally, we estimate term $I_{6},$ let $\sigma\leq \frac{\gamma}{2},$ then
\begin{equation*}\begin{split}
&|I_{6}|\\&=\left|\lambda\int_{0}^{T}\int_{\O}\psi\left(\nabla\d_{\delta}\odot\nabla\d_{\delta}-
(\frac{1}{2}|\nabla\d_{\delta}|^{2}+F(\d_{\delta}))I_{3}\right)\nabla B[S_{m}(\rho_{\delta}^{\sigma})-\frac{1}{|\O|}\int_{\O}S_{m}(\rho_{\delta}^{\sigma})dx]dxdt\right|\\
&\leq C \int_{0}^{T}\|\nabla \d_{\delta}\|_{L^{4}(\O)}^{2}\|\nabla B[S_{m}(\rho_{\delta}^{\sigma})-\frac{1}{|\O|}\int_{\O}S_{m}(\rho_{\delta}^{\sigma})dx]\|_{L^{2}(\O)}dt\\
&\quad +C\int_{0}^{T}\|\nabla B[S_{m}(\rho_{\delta}^{\sigma})-\frac{1}{|\O|}\int_{\O}S_{m}(\rho_{\delta}^{\sigma})dx]\|_{L^{2}(\O)}dt
\\& \leq C,
\end{split}\end{equation*}
where we used the smoothness of $F$, \eqref{v1+}, \eqref{V8}, \eqref{V9} and $$\nabla\d_{\delta} \in L^{4}((0,T)\times\O).$$

All those above estimates together yield the following lemma:
\begin{Lemma}\label{Le5} Let $\gamma>\frac{3}{2}$.  There exists $\sigma>0$ depending only on $\gamma$, such that
$$\rho_{\delta}^{\gamma+\sigma}+\delta\rho_{\delta}^{\beta+\sigma} \text{ is bounded in } L^{1}((0,T)\times\O). $$
\end{Lemma}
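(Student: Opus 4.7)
The plan is to deduce the improved integrability by using the Bogovskii operator $B$ from Section 5 to construct a clever test function for the momentum equation \eqref{V7b}. Specifically, I would set
$$\varphi(t,x) = \psi(t)\,B\!\left[S_m[\rho_\delta^\sigma] - \frac{1}{|\O|}\int_\O S_m[\rho_\delta^\sigma]\,dx\right],$$
where $\psi \in \mathcal{D}(0,T)$, $S_m$ is the spatial mollification appearing in \eqref{Limit1} to justify the renormalization of the continuity equation, and $\sigma > 0$ is a small parameter to be fixed later. This $\varphi$ vanishes on $\partial\O$, hence is admissible, and its divergence recovers $\psi \rho_\delta^\sigma$ up to a spatial mean; paired against $\nabla(\rho_\delta^\gamma + \delta\rho_\delta^\beta)$ in \eqref{V7b} this produces precisely the quantity $\int\psi(\rho_\delta^{\gamma+\sigma} + \delta\rho_\delta^{\beta+\sigma})\,dx\,dt$ on the left-hand side, up to a bounded mean-value correction.

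Next, I would insert $\varphi$ into \eqref{V7b} and use \eqref{Limit1} to evaluate $\partial_t\varphi$, then pass to the limit $m\to\infty$ using the fact that the remainder $q_m$ tends to zero in $L^2$. This yields an inequality of the schematic form
$$\int_0^T\!\!\int_\O \psi(\rho_\delta^{\gamma+\sigma} + \delta\rho_\delta^{\beta+\sigma})\,dx\,dt \;\le\; \sum_{i=1}^{6}|I_i|,$$
in which $I_1,\dots,I_5$ collect the fluid contributions (handled essentially as in \cite{FNP}) and $I_6$ is a new director-field contribution. Each $I_i$ is then to be estimated using the uniform bounds of Proposition \ref{P2}, the continuity \eqref{v1+} of $B$, the Sobolev embedding $W^{1,p}_0 \hookrightarrow L^\infty$ for $p>3$, and H\"older's inequality. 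Each estimate imposes a ceiling on $\sigma$; tracking the most restrictive ones, arising from the convective pieces $\rho_\delta\u_\delta\otimes\u_\delta$ and $\rho_\delta\u_\delta B[\Dv S_m(\rho_\delta^\sigma\u_\delta)]$, forces $\sigma \le \frac{2\gamma}{3}-1$, which is strictly positive precisely because $\gamma > \frac{3}{2}$.

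The main new difficulty, compared with the pure Navier--Stokes argument of \cite{FNP}, is the director coupling term
$$I_6 = \lambda\int_0^T\!\!\int_\O \psi\left(\nabla\d_\delta\odot\nabla\d_\delta - \Big(\tfrac12|\nabla\d_\delta|^2+F(\d_\delta)\Big)I_3\right)\nabla B\!\left[S_m[\rho_\delta^\sigma] - \text{mean}\right]dx\,dt.$$
For this I would apply H\"older's inequality with exponents $(4,4,2)$: the $L^4((0,T)\times\O)$ bound on $\nabla\d_\delta$ from Lemma \ref{Lemma1} controls the two quadratic gradient factors, the $L^\infty$ bound on $\d_\delta$ handles $F(\d_\delta)$, and $\|\nabla B[\rho_\delta^\sigma-\text{mean}]\|_{L^2}$ is controlled by $\|\rho_\delta^\sigma\|_{L^2}$ via \eqref{v1+}. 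The latter is uniformly bounded provided $\sigma \le \gamma/2$, by \eqref{V8}. Intersecting all the constraints yields a single $\sigma>0$ depending only on $\gamma$, which gives the claim.
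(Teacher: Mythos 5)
Your proposal follows essentially the same route as the paper: the same Bogovskii-type test function $\psi(t)\,B[S_m[\rho_\delta^\sigma]-\text{mean}]$ built on the regularized renormalized continuity equation, the same decomposition into $I_1,\dots,I_6$ after letting $m\to\infty$, the same constraints on $\sigma$ (with $\sigma\le \frac{2\gamma}{3}-1>0$ from the convective terms being the binding one), and the same H\"older estimate of the director term $I_6$ using the $L^4$ bound on $\nabla\d_\delta$ and $\|\nabla B[\cdot]\|_{L^2}\le C\|\rho_\delta^\sigma\|_{L^2}$ under $\sigma\le\gamma/2$. The argument is correct and matches the paper's proof in all essentials.
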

\bigskip
\subsection{The limit passage}
By virtue of the estimates in Proposition \ref{P2} and Remark \ref{R51}, we can assume that, up to a subsequence if necessary,
\begin{equation}\label{Limit2}
\rho_{\delta}\to\rho \text{ in }C([0,T],L^{\gamma}_{weak}(\O)),
\end{equation}
\begin{equation}\label{Limit3}
\u_{\delta}\to\u \text{ weakly in } L^{2}([0,T];H^{1}_{0}(\O)),
\end{equation}
\begin{equation}\label{Limit4}
\d_{\delta}\to \d \text{ weakly in }L^{2}([0,T];H^{2}_{0}(\O))\cap L^{\infty}([0,T];H^{1}_{0}(\O)).
\end{equation}
\begin{equation}\label{L1+}
\d_{\delta}\to \d \text { strongly in } L^{2}(0,T;H^{1}(\O)),
\end{equation}
\begin{equation}\label{L2+}
\nabla\d_{\delta}\to\nabla\d \text { weakly in } L^{4}((0,T)\times\O),
\end{equation}
\begin{equation}\label{L3+}
\D\d_{\delta}-f(\d_{\delta})\to \D\d-f(\d) \text { weakly in } L^{2}(0,T;L^{2}(\O)),
\end{equation}
\begin{equation}\label{L4+}
F(\d_{\delta})\to F(\d) \text { strongly in } L^{2}(0,T;H^{1}(\O)).
\end{equation}
Letting $\delta\to 0,$  we have,
\begin{equation}\label{L1++}
\rho_{\delta}^{\gamma}\to \overline{\rho^{\gamma}} \text{ weakly in } L^{1}((0,T)\times(\O)),
\end{equation}
subject to a subsequence.

From \eqref{L1+} and \eqref{L4+}, we have, as $\delta \to 0,$
\begin{equation}
\begin{split}\label{L2++}
&\nabla\d_{\delta}\odot\nabla\d_{\delta}-(\frac{1}{2}|\nabla\d_{\delta}|^{2}+F(\d_{\delta}))I_{3}\\
&\to \nabla\d\odot\nabla\d-(\frac{1}{2}|\nabla\d|^{2}+F(\d))I_{3}\quad \text{ in }\mathcal{D^{'}}(\O\times(0,T)),
\end{split}
\end{equation}
and
\begin{equation}\label{L3++}
\u_{\delta}\cdot\nabla\d_{\delta}\to \u\cdot\nabla\d \quad \text{ in }\mathcal{D^{'}}(\O\times(0,T)),
\end{equation}
as $\delta\to 0.$

On the other hand, by virtue of \eqref{V7b}, \eqref{V8}-\eqref{V11}, we obtain
\begin{equation}\label{L4++}
\rho_{\delta}\u_{\delta}\to \rho\u \text { in } C([0,T];L^{\frac{2\gamma}{\gamma+1}}_{weak}(\O)).
\end{equation}
Similarly, we have, as $\delta\to 0$,
\begin{equation*}
\d_{\delta}\to\d \text{ in } C([0,T];L^{2}_{weak}(\O)).
\end{equation*}
By Lemma \ref{Le5}, we get $$\delta\rho_{\delta}^{\beta}\to 0  \text{ in } L^{1}((0,T)\times\O) \text{ as } \delta\to 0.$$
Thus, the limit of $(\rho, \rho\u, \d )$ satisfies the initial and boundary conditions of \eqref{I2} and \eqref{I3}.

Since $\gamma>\frac{3}{2},$ \eqref{Limit3} and \eqref{L4++} combined with the compactness of $H^{1}(\O)\hookrightarrow L^{2}(\O)$ imply, as $\delta\to 0$,
$$\rho_{\delta}\u_{\delta}\otimes\u_{\delta}\to \rho\u\otimes\u \quad\quad\quad\text{ in } \mathcal{D'}((0,T)\times\O).$$
Consequently, letting $\delta\to 0$ in \eqref{V7} and making use of \eqref{Limit2}-\eqref{L4++},
the limit of $(\rho_{\delta},\u_{\delta},\d_{\delta})$ satisfies the following  system:
\begin{subequations}
\label{Limit6}
\begin{align}
&\rho_{t}+\Dv(\rho\u)=0,\label{Limit6a}\\
&(\rho\u)_{t}+\Dv(\rho\u\otimes\u)+\nabla\overline{\rho^{\gamma}}
=\mu\Delta\u-\lambda\Dv\left(\nabla\d\odot\nabla\d-(\frac{1}{2}|\nabla\d|^{2}+F(\d))I_{3}\right) \label{Limit6b}\\
&\d_{t}+\u\cdot\nabla\d=\Delta\d-f(\d)
\label{Limit6c}\end{align}\end{subequations}
in $\mathcal{D^{'}}(\O\times(0,T)).$
\subsection{The strong convergence of density}
In order to complete the proof of Theorem \ref{T1}, we still need to show the strong convergence of $\rho_{\delta}$ in $L^{1}(\O),$ or, equivalently $\bar{\rho^{\gamma}}=\rho^{\gamma}.$

 Since $\rho_{\delta},\u_{\delta}$ is a renormalized solution of the equation \eqref{Limit6a} in $\mathcal{D^{'}}((0,T)\times \R^{3})$, we have
\begin{equation*}
T_{k}(\rho_{\delta})_{t}+\Dv(T_{k}(\rho_{\delta}\u_{\delta}))+(T_{k}(\rho_{\delta})\rho_{\delta}
-T_{k}(\rho_{\delta}))\Dv(\u_{\delta})=0 \text{ in } \mathcal{D^{'}}((0,T)\times R^{3}),
\end{equation*}
where $T_{k}(z)=kT(\frac{z}{k}) \text{ for } z\in \R,\quad k=1,2,3\cdots $
and $T\in C^{\infty}(\R)$ is chosen so that
$$T(z)=z \text{ for } z\leq1,\quad T(z)=2 \text { for } z\geq 3,\quad T \text{ convex}.$$
Passing to the limit for $\delta\to 0$ we deduce that
\begin{equation*}
\partial_{t}\overline{T_{k}(\rho)}+\Dv(\overline{(T_{k}(\rho))}\u)+\overline{(T_{k}^{'}(\rho)\rho-T_{k}(\rho))\Dv\u}=0 \quad\text{ in } \mathcal{D^{'}}((0,T)\times \R^{3})),
\end{equation*}
where
$$T_{k}^{'}(\rho{\delta})\rho_{\delta}-T_{k}(\rho_{\delta})\Dv\u_{\delta}\to \overline{(T_{k}^{'}(\rho)\rho-T_{k}(\rho))\Dv\u}\quad \text { weakly in } L^{2}((0,T)\times\O),$$
and$$T_{k}(\rho_{\delta})\to \overline{T_{k}(\rho)} \text{ in } C([0,T];L^{p}_{weak}(\O)) \text { for all }1\leq p<\infty.$$
Using the function
$$\varphi(t,x)=\psi(t)\eta(x)A_{i}[T_{k}(\rho_{\delta})],\quad \psi\in \mathcal{D}[0,T],\quad \eta\in \mathcal{D}(\O),$$
as a test function for \eqref{V7b},
by a similar calculation to the previous sections, we can deduce the following result:
\begin{Lemma}\label{Le6}
Let $(\rho_{\delta},\u_{\delta})$ be the sequence of approximate solutions constructed in Proposition \ref{P2}, then
\begin{equation*}
\lim_{\delta\to 0}\int_{0}^{T}\int_{\O}\psi\eta(\rho_{\delta}^{\gamma}-
\mu\Dv\u_{\delta})T_{k}(\rho_{\delta})dxdt=\int_{0}^{T}\int_{\O}\psi\eta(\overline{\rho^{\gamma}}
-\mu\Dv\u)\overline{T_{k}(\rho)}dxdt
\end{equation*}
for any $\psi \in \mathcal{D}(0,T), \eta \in \mathcal{D}(\O).$
\end{Lemma}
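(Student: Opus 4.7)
The plan is to follow the same template as Lemma \ref{Le6+}, but with the truncation $T_{k}(\rho_{\delta})$ inserted into the operator $A_{i}=\partial_{x_{i}}\Delta^{-1}$ in place of $\rho_{\delta}$ itself. Specifically, I would use
\begin{equation*}
\varphi_{\delta}(t,x) = \psi(t)\eta(x)\, A_{i}[T_{k}(\rho_{\delta})],\qquad i=1,2,3,
\end{equation*}
as a test function in the approximate momentum equation \eqref{V7b}, and use
\begin{equation*}
\varphi(t,x) = \psi(t)\eta(x)\, A_{i}[\overline{T_{k}(\rho)}]
\end{equation*}
as a test function in the limit momentum equation \eqref{Limit6b}. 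Both test functions lie in $C([0,T]\times\overline{\O})$: since $T_{k}$ is uniformly bounded, $A_{i}$ applied to it belongs to $L^{\infty}_{t}W^{1,p}_{x}$ for all $p<\infty$, and Sobolev embedding yields continuity in $x$. After integration by parts in time and rearranging, the two identities take the same structural form as \eqref{V3} and \eqref{V4}, with the scalar inside $A_{i}$ replaced by $T_{k}(\rho_{\delta})$, respectively $\overline{T_{k}(\rho)}$.

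The key analytic inputs are the Mikhlin bounds for $A_{i}$ already recalled in the proof of Lemma \ref{Le6+}. Since $T_{k}(\rho_{\delta})\to \overline{T_{k}(\rho)}$ in $C([0,T];L^{p}_{weak}(\O))$ for every finite $p$, the bounds
\begin{equation*}
\|A_{i}[v]\|_{W^{1,p}(\O)}\le c\|v\|_{L^{p}(\R^{3})},\qquad \|A_{i}[v]\|_{L^{\infty}(\O)}\le c\|v\|_{L^{p}(\R^{3})}\ (p>3),
\end{equation*}
together with compact embedding give
\begin{equation*}
A_{i}[T_{k}(\rho_{\delta})] \longrightarrow A_{i}[\overline{T_{k}(\rho)}]\ \text{ in }\ C([0,T]\times\overline{\O}),
\end{equation*}
and $\nabla A_{i}[T_{k}(\rho_{\delta})] \to \nabla A_{i}[\overline{T_{k}(\rho)}]$ in $C([0,T];L^{q}_{weak}(\O))$ for every $q<\infty$. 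These, combined with the strong convergence $\u_{\delta}\to\u$ in $L^{2}((0,T)\times\O)$ obtained from Aubin--Lions applied to $\rho_{\delta}\u_{\delta}$, let us pass to the limit term by term in every piece involving only $\rho_{\delta}$ and $\u_{\delta}$. The director pieces involving $\nabla\d_{\delta}\odot\nabla\d_{\delta}$, $|\nabla\d_{\delta}|^{2}I_{3}$, and $F(\d_{\delta})$ are handled exactly as in Lemma \ref{Le6+}, using \eqref{L1+}--\eqref{L4+} together with the uniform $L^{\infty}$ bound on $A_{i}[T_{k}(\rho_{\delta})]$; in particular, integrals such as $\int |\nabla\d_{\delta}|^{2}\bigl|A_{i}[T_{k}(\rho_{\delta})]-A_{i}[\overline{T_{k}(\rho)}]\bigr|\,dx\,dt$ vanish as $\delta\to 0$.

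The step I expect to be the main obstacle is the commutator limit
\begin{equation*}
\int_{0}^{T}\!\!\int_{\O} \psi\eta\, \u_{\delta}\cdot\bigl(T_{k}(\rho_{\delta}) R[\rho_{\delta}\u_{\delta}] - \rho_{\delta}\u_{\delta}\, R[T_{k}(\rho_{\delta})]\bigr)\,dx\,dt \longrightarrow \int_{0}^{T}\!\!\int_{\O} \psi\eta\, \u\cdot\bigl(\overline{T_{k}(\rho)} R[\rho\u] - \rho\u\, R[\overline{T_{k}(\rho)}]\bigr)\,dx\,dt,
\end{equation*}
where $R_{ij}=\partial_{x_{i}}A_{j}$. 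This is controlled by the Lions--Feireisl commutator lemma for Riesz transforms (see \cite{FNP}): if $r_{\delta}\rightharpoonup r$ in $L^{p}$ and $\mathbf{V}_{\delta}\rightharpoonup\mathbf{V}$ in $L^{q}$ with $\tfrac{1}{p}+\tfrac{1}{q}<1$, then $r_{\delta} R[\mathbf{V}_{\delta}]-\mathbf{V}_{\delta} R[r_{\delta}]\rightharpoonup rR[\mathbf{V}]-\mathbf{V}R[r]$ in a suitable $L^{s}$. The exponent condition holds because $T_{k}(\rho_{\delta})$ is bounded in $L^{\infty}$ while $\rho_{\delta}\u_{\delta}$ lies in $L^{\infty}_{t}L^{2\gamma/(\gamma+1)}_{x}\cap L^{2}_{t}L^{6\gamma/(\gamma+6)}_{x}$ for $\gamma>\tfrac{3}{2}$. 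The strong $L^{2}$ compactness of $\u_{\delta}$ upgrades this weak convergence to convergence of the full integral. Equating the two resulting identities and cancelling the common limit terms yields exactly the claimed equality; the artificial pressure $\delta\rho_{\delta}^{\beta}T_{k}(\rho_{\delta})$ drops out because $\delta\rho_{\delta}^{\beta+\sigma}$ is $L^{1}$-bounded by Lemma \ref{Le5} and $T_{k}$ is bounded, forcing $\delta\rho_{\delta}^{\beta}\to 0$ in $L^{1}((0,T)\times\O)$.
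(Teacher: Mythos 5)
Your overall strategy is exactly the paper's: Lemma \ref{Le6} is obtained by testing \eqref{V7b} with $\psi\eta A_{i}[T_{k}(\rho_{\delta})]$ and the limit equation with $\psi\eta A_{i}[\overline{T_{k}(\rho)}]$, and then repeating the term-by-term limit passage of Lemma \ref{Le6+} with $T_{k}(\rho_{\delta})$ in place of $\rho_{\varepsilon}$. Your treatment of the director terms via \eqref{L1+}--\eqref{L4+} and the uniform $L^{\infty}$ bound on $A_{i}[T_{k}(\rho_{\delta})]$, the use of the Mikhlin bounds to get $A_{i}[T_{k}(\rho_{\delta})]\to A_{i}[\overline{T_{k}(\rho)}]$ in $C([0,T]\times\overline{\O})$, and the disposal of $\delta\rho_{\delta}^{\beta}T_{k}(\rho_{\delta})$ via Lemma \ref{Le5} all match what the paper intends by ``a similar calculation to the previous sections.''

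There is, however, one step whose justification as written would fail. You claim $\u_{\delta}\to\u$ strongly in $L^{2}((0,T)\times\O)$ ``from Aubin--Lions applied to $\rho_{\delta}\u_{\delta}$,'' and you use this to convert the weak convergence of the commutator into convergence of the full integral. Aubin--Lions applied to $\rho_{\delta}\u_{\delta}$ yields compactness of the \emph{momentum}, not of the velocity: the momentum equation controls $\partial_{t}(\rho_{\delta}\u_{\delta})$ but gives no bound on $\partial_{t}\u_{\delta}$, and on vacuum regions $\u_{\delta}$ may oscillate, so in this framework $\u_{\delta}$ converges only weakly in $L^{2}(0,T;W^{1,2}_{0}(\O))$. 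The correct mechanism --- the one in \cite{FNP} and the one the paper implicitly relies on --- is the opposite pairing: for a.e.\ $t$ the div--curl lemma gives weak convergence of $T_{k}(\rho_{\delta})R[\rho_{\delta}\u_{\delta}]-\rho_{\delta}\u_{\delta}R[T_{k}(\rho_{\delta})]$ in $L^{r}(\O)$ for some $r>6/5$ (using that $T_{k}(\rho_{\delta})$ is bounded in $L^{\infty}$ and $\rho_{\delta}\u_{\delta}$ in $L^{2\gamma/(\gamma+1)}$ with $\gamma>\frac32$), hence \emph{strong} convergence in $W^{-1,2}(\O)$ by compact embedding; this strong convergence in the negative-order space is then paired with the \emph{weak} convergence of $\u_{\delta}$ in $L^{2}(0,T;W^{1,2}_{0}(\O))$. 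With that substitution your argument closes; as written, the weak-times-weak product in the commutator integral is not justified.
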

In order to get the strong convergence of $\rho_{\delta},$ we need to define the oscillation defect measure as follows:
\begin{equation*}
OSC_{\gamma+1}[\rho_{\delta}\to\rho]((0,T)\times\O)=\sup_{k\geq 1}\lim_{\delta\to 0}\sup\int_{0}^{T}\int_{\O}|T_{k}(\rho_{\delta})-T_{k}(\rho))|^{\gamma+1}dxdt.
\end{equation*}
Here we state a lemma about the oscillation defect measure:

\begin{Lemma}\label{Le7}
There exists a constant $C$ independent of $k$ such that
\begin{equation*}
OSC_{\gamma+1}[\rho_{\delta}\to\rho]((0,T)\times\O) \leq C
 \end{equation*}
for any $k \geq 1.$
\end{Lemma}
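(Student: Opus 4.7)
\medskip

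\noindent\textbf{Proof plan.}
The plan is to follow the classical Feireisl--Lions scheme (cf.\ \cite{F,FNP}), combining the weak continuity identity from Lemma \ref{Le6} with the convexity of $z\mapsto z^\gamma$ and the monotone, $1$-Lipschitz character of the truncation $T_k$.

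First, varying the cutoffs $\psi\eta$ in Lemma \ref{Le6} gives the pointwise (in the weak-limit sense) identity
\begin{equation*}
\overline{\rho^{\gamma}\,T_k(\rho)}-\overline{\rho^{\gamma}}\cdot\overline{T_k(\rho)}
=\mu\bigl(\overline{T_k(\rho)\,\Dv\u}-\overline{T_k(\rho)}\,\Dv\u\bigr)\quad \text{a.e.\ on }(0,T)\times\O.
\end{equation*}
Combined with the elementary pointwise bound
$$|T_k(a)-T_k(b)|^{\gamma+1}\le (a^\gamma-b^\gamma)\bigl(T_k(a)-T_k(b)\bigr),\qquad a,b\ge 0,\ \gamma\ge 1,$$
(verified by case analysis on whether $a,b$ lie above or below $k$, reducing to $(a-b)^\gamma\le a^\gamma-b^\gamma$ for $a\ge b\ge 0$) and a weak-limit passage exploiting the strong $L^p$ compactness of $T_k(\rho_\delta)$, this yields
\begin{equation*}
\limsup_{\delta\to 0}\int_0^T\!\!\int_\O |T_k(\rho_\delta)-T_k(\rho)|^{\gamma+1}\,dx\,dt
\le \int_0^T\!\!\int_\O\bigl(\overline{\rho^\gamma T_k(\rho)}-\overline{\rho^\gamma}\cdot\overline{T_k(\rho)}\bigr)\,dx\,dt.
\end{equation*}

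Second, substituting the identity above and applying H\"{o}lder's inequality,
\begin{equation*}
\int_0^T\!\!\int_\O\bigl|\overline{T_k(\rho)\Dv\u}-\overline{T_k(\rho)}\Dv\u\bigr|\,dx\,dt
\le \|\Dv\u\|_{L^2((0,T)\times\O)}\,\liminf_{\delta\to 0}\|T_k(\rho_\delta)-T_k(\rho)\|_{L^2((0,T)\times\O)}.
\end{equation*}
The first factor is controlled uniformly by \eqref{V11}. For the second factor I would interpolate $L^2$ between $L^1$ and $L^{\gamma+1}$ with parameter $\theta=(\gamma+1)/(2\gamma)$; the $L^1$ norm is uniformly bounded in both $k$ and $\delta$ by $\|\rho_\delta\|_{L^1}+\|\rho\|_{L^1}\le C$ via \eqref{V8}. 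Since $\theta(\gamma+1)=(\gamma+1)^2/(2\gamma)<\gamma+1$ whenever $\gamma>1$, Young's inequality absorbs $\|T_k(\rho_\delta)-T_k(\rho)\|_{L^{\gamma+1}}^{\theta(\gamma+1)}$ into the left-hand side, producing a $k$-independent bound and hence $\sup_k OSC_{\gamma+1}[\rho_\delta\to\rho]\le C$.

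The main obstacle will be the rigorous weak-limit comparison in the convexity step: one must verify that the defect $\overline{\rho^\gamma T_k(\rho)}-\overline{\rho^\gamma}\,\overline{T_k(\rho)}$ is nonnegative and that it truly dominates the $L^{\gamma+1}$ oscillation. This is achieved by expanding $(\rho_\delta^\gamma-\rho^\gamma)(T_k(\rho_\delta)-T_k(\rho))$ and treating each mixed term separately, using the weak convergence $\rho_\delta^\gamma\rightharpoonup\overline{\rho^\gamma}$ together with the strong $L^p$ compactness of $T_k(\rho_\delta)$; the latter follows from Lemma \ref{L+} applied to the uniformly $L^\infty$-bounded sequence $T_k(\rho_\delta)$, whose time derivative is controlled via the renormalized continuity equation. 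Once this is in place, the interpolation/Young bookkeeping described above completes the proof.
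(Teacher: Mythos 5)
Your overall route coincides with the paper's: the paper compresses the first (convexity) step into a citation of \cite{FNP} and then performs exactly your H\"older estimate against $\Dv\u_\delta$ followed by the interpolation of $L^2$ between $L^1$ and $L^{\gamma+1}$ and Young absorption. The identity you read off from Lemma \ref{Le6}, the pointwise bound $|T_k(a)-T_k(b)|^{\gamma+1}\le(a^\gamma-b^\gamma)(T_k(a)-T_k(b))$, and the exponent bookkeeping ($\theta=(\gamma+1)/(2\gamma)<1$ for $\gamma>1$) are all correct. There is, however, one genuine error in how you justify the weak-limit passage: you invoke ``strong $L^p$ compactness of $T_k(\rho_\delta)$'' and claim it follows from Lemma \ref{L+}. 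This is false and cannot be repaired in that form. Aubin--Lions needs a uniform bound in a space $X_0$ \emph{compactly} embedded in $X$; for $T_k(\rho_\delta)$ you control only the $L^\infty((0,T)\times\O)$ norm and the time derivative, with no spatial derivative bound, and $L^\infty(\O)$ does not embed compactly into $L^p(\O)$. More tellingly, if $T_k(\rho_\delta)\to T_k(\rho)$ strongly in $L^p$ then $\overline{T_k(\rho)}=T_k(\rho)$, the oscillation defect measure vanishes identically, and Lemma \ref{Le7} together with all of Section 6.3 would be vacuous; the defect-measure machinery exists precisely because this strong convergence is not yet available.

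The correct replacement is a sign argument, not a compactness argument. Expanding $(\rho_\delta^\gamma-\rho^\gamma)(T_k(\rho_\delta)-T_k(\rho))$, each of the four terms passes to the limit using only weak convergence: $T_k(\rho)$ and $\rho^\gamma$ are fixed functions tested against the weakly convergent sequences $\rho_\delta^\gamma\rightharpoonup\overline{\rho^\gamma}$ and $T_k(\rho_\delta)\rightharpoonup\overline{T_k(\rho)}$ (weak-$*$ in $L^\infty$), while $\rho_\delta^\gamma T_k(\rho_\delta)\rightharpoonup\overline{\rho^\gamma T_k(\rho)}$ in $L^1$ thanks to the equi-integrability supplied by Lemma \ref{Le5}. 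This gives
\[
\begin{split}
\lim_{\delta\to0}\int_0^T\!\!\int_\O(\rho_\delta^\gamma-\rho^\gamma)\bigl(T_k(\rho_\delta)-T_k(\rho)\bigr)\,dx\,dt
&=\int_0^T\!\!\int_\O\Bigl(\overline{\rho^\gamma T_k(\rho)}-\overline{\rho^\gamma}\;\overline{T_k(\rho)}\Bigr)\,dx\,dt\\
&\quad+\int_0^T\!\!\int_\O\bigl(\overline{\rho^\gamma}-\rho^\gamma\bigr)\bigl(\overline{T_k(\rho)}-T_k(\rho)\bigr)\,dx\,dt,
\end{split}
\]
and the last integral is $\le 0$ because $\overline{\rho^\gamma}\ge\rho^\gamma$ (convexity of $z\mapsto z^\gamma$) while $\overline{T_k(\rho)}\le T_k(\rho)$ (concavity of the truncation $T_k$, which is the intended choice in the construction despite the word ``convex'' in Section 6.3). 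With this substitution your argument closes and agrees with the paper's proof.
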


\begin{proof}
Following the line of argument presented in  \cite{FNP}, and by  Lemma \ref{Le6}, we obtain
\begin{equation*}
OSC_{\gamma+1}[\rho_{\delta}\to\rho]((0,T)\times\O)
\leq \lim_{\delta\to 0}\int_{0}^{T}\int_{\O}\Dv\u_{\delta}T_{k}(\rho_{\delta})-\Dv\u\overline{T_{k}(\rho)}dxdt.
\end{equation*}
On the other hand,
\begin{equation*}\begin{split}
&\lim_{\delta\to 0}\int_{0}^{T}\int_{\O}\Dv\u_{\delta}T_{k}(\rho_{\delta})-\Dv\u\overline{T_{k}(\rho)}dxdt\\
&=\lim_{\delta\to 0}\int_{0}^{T}\int_{\O}(T_{k}(\rho_{\delta})-T_{k}(\rho)+T_{k}(\rho)-\overline{T_{k}(\rho)})\Dv\u_{\delta}dxdt\\
&\leq 2\sup_{\delta}\|\nabla\u_{\delta}\|_{L^{2}((0,T)\times\O)}\lim_{\delta\to 0}\sup\|T_{k}(\rho_{\delta})-T_{k}(\rho)\|_{L^{2}((0,T)\times\O)}.
\end{split}\end{equation*}
So we can conclude the Lemma.
\end{proof}

We are now ready to show the strong convergence of the density. To this end, we introduce a sequence of functions $L_{k} \in C^{1}(\R):$
\begin{equation*}
 L_{k}(z)=\begin{cases}z ln z,\quad 0\leq z<k\\zln(k)+z\int_{k}^{z}\frac{T_{k}(s)}{s^{2}}ds, \quad z\geq k.\end{cases}
\end{equation*}
Noting that $L_{k}$ can be written as $$L_{k}(z)=\beta_{k}z+b_{k}z,$$
where $b_{k}$ satisfy \eqref{EI9}, we deduce that
\begin{equation}\label{Limit7}
\partial_{t}L_{k}(\rho_{\delta})+\Dv(L_{k}(\rho_{\delta})\u_{\delta})+T_{k}(\rho_{\delta})\Dv\u_{\delta}=0,
\end{equation}
and \begin{equation}\label{Limit8}
\partial_{t}L_{k}(\rho)+\Dv(L_{k}(\rho)\u)+T_{k}(\rho)\Dv\u=0
\end{equation}
in $\mathcal{D^{'}}((0,T)\times\O).$
Letting $\delta\to 0,$  we can assume that
\begin{equation*}
L_{k}(\rho_{\delta})\to\overline{L_{k}(\rho)} \text { in } C([0,T];L^{\gamma}_{weak}(\O)).
\end{equation*}
Taking the difference of  \eqref{Limit7} and \eqref{Limit8},  and integrating with respect to time $t$, we obtain
\begin{equation*}\begin{split}
&\int_{\O}(L_{k}(\rho_{\delta})-L_{k}(\rho))\phi dx\\
&=\int_{0}^{T}\int_{\O}\Big((L_{k}(\rho_{\delta})\u_{\delta}-L_{k}(\rho)\u)\cdot\nabla \phi+(T_{k}(\rho)\Dv\u-T_{k}(\rho_{\delta})\Dv\u_{\delta})\phi\Big)dxdt,
\end{split}\end{equation*}
for any $\phi \in \mathcal{D}(\O).$
Following the line of argument in \cite{FNP}, we get
\begin{equation}\label{Limit9}\begin{split}
&\int_{\O}\left(\overline{L_{k}(\rho)}-L_{k}(\rho)\right)(t)dx\\
&=\int_{0}^{T}\int_{\O}T_{k}(\rho)\Dv\u dx dt
-\lim_{\delta\to 0^{+}}\int_{0}^{T}\int_{\O}T_{k}(\rho_{\delta})\Dv\u_{\delta}dxdt.
\end{split}\end{equation}
We observe that the term $\overline{L_{k}(\rho)}-L_{k}(\rho)$ is bounded by its definition.
Using Lemma \ref{Le7} and the monotonicity of the pressure, we can estimate the right-hand side of \eqref{Limit9}:
\begin{equation}\label{Limit10}\begin{split}
&\int_{0}^{T}\int_{\O}T_{k}(\rho)\Dv\u dx dt-\lim_{\delta\to 0^{+}}\int_{0}^{T}\int_{\O}T_{k}(\rho_{\delta})\Dv\u_{\delta} dxdt\\
&\leq\int_{0}^{T}\int_{\O}(T_{k}(\rho)-\overline{T_{k}(\rho)})\Dv\u dxdt.
\end{split}\end{equation}
By virtue of Lemma \ref{Le7}, the right-hand side of \eqref{Limit10} tends to zero as $k\to \infty.$
So we conclude that $$\overline{\rho\log(\rho)(t)}=\rho\log(\rho)(t)$$ as $k\to \infty.$
Thus we obtain  the strong convergence of $\rho_{\delta}$ in $L^{1}((0,T)\times\O).$

Therefore we complete the proof of Theorem \ref{T1}.

\bigskip

\section{Large-Time Behavior of Weak Solutions}

The aim of this section is to study the large-time behavior of the finite energy weak solutions obtained in Theorem \ref{T1}.

First of all, from Theorem \ref{T1}, we have
\begin{equation}\label{lar1}
\ess_{t>0}E(t)+\int_{0}^{T}\int_{\O}\left(\mu|\nabla\u|^{2}+\lambda|\D\d-f(\d)|^{2}\right)dxdt\leq E(0),
\end{equation}
where $$E(t)=\int_{\O}\left(\frac{1}{2}\rho|\u|^{2}+\frac{\rho^{\gamma}}{\gamma-1}+\lambda\frac{1}{2}|\nabla\d|^{2}+\lambda F(\d)\right)dx.$$
Following the  argument in \cite{ FP}, we take a sequence
$$\rho_{m}(t,x):=\rho(t+m,x);$$
$$\u_{m}(t,x):=\u(t+m,x);$$
$$\d{m}(t,x):=\d(t+m,x),$$
for all integer $m$, and $t\in (0,1), x\in \O$.

From \eqref{lar1}, we have
$$\rho_{m}\in L^{\infty}([0,1];L^{\gamma}(\O)),\quad \d_{m}\in L^{\infty}([0,1];H^{1}(\O)),$$
$$\sqrt{\rho_{m}}\u_{m}\in L^{\infty}([0,1];L^{2}(\O)),\quad \rho_{m}\u_{m}\in L^{\infty}([0,1];L^{\frac{2\gamma}{\gamma+1}}(\O)),$$
which are independent of $m$.
Moreover, we have \begin{equation}\label{lar2}
\lim_{m\to\infty}\int_{0}^{1}\left(\|\nabla\u_{m}\|_{L^{2}(\O)}+\|\D \d_{m}-f(\d_{m})\|_{L^{2}(\O)}\right)dt=0.
\end{equation}
So we can assume that, up to a subsequence if necessary, as $m\to \infty$,
$$\rho_{m}(t,x)\to \rho_{s} \text{ weakly in } L^{\gamma}((0,1)\times\O);$$
$$\u_{m}(t,x) \to \u_{s} \text { weakly in } L^{2}([0,1];H^{1}_{0}(\O));$$
$$\D\d_{m}-f(\d_{m})\to \D\d_{s}-f(\d_{s}) \text { weakly in } L^{2}([0,1];L^{2}(\O)).$$
Furthermore, $$\int_{\O}\rho_{s}dx\leq \lim_{m\to\infty}\inf\int_{\O}\rho_{m}dx\leq C(E_{0}).$$
Using \eqref{lar2} and the Poincar\'e inequality, we have $$\lim_{m\to \infty}\int_{0}^{1}\|\u_{m}\|_{L^{2}(\O)}^{2}dt=0.$$
By the embedding of $H^{1}\hookrightarrow L^{2}$, we have
\begin{equation*}
\u_{s}=0 \text { almost everythere in } (0,1)\times\O.
\end{equation*}
From \eqref{lar2} again, we have
\begin{equation}\label{lar3}
\D\d_{s}-f(\d_{s})=0 \text { almost everythere in } (0,1)\times\O,
\end{equation}
under the  boundary condition
\begin{equation}\label{lar1+}
\d_{s}|_{\partial\O}=\d_{0}.
\end{equation}
By the elliptic theory, there exist a  unique solution $\d_{s}\in C^{2}(\O)\cap C(\overline{\O})$ to \eqref{lar3} and \eqref{lar1+}.

On one hand, from \eqref{lar1} and \eqref{lar2}, we have
\begin{equation}\label{lar4}
\lim_{m\to\infty}\int_{0}^{1}\left(\|\rho_{m}|\u_{m}|^{2}\|_{L^{\frac{3\gamma}{\gamma+3}}(\O)}
+\|\rho_{m}|\u_{m}|\|_{L^{\frac{6\gamma}{\gamma+6}}(\O)}^{2}\right)dt=0.
\end{equation}
Since $\rho,\u$ are the solutions to \eqref{I1a} in the sense of renormalized solutions, one has, in particular,
\begin{equation}
\label{lar2+}
\rho_{t}+\Dv(\rho\u)=0 \quad \text{ in } \mathcal{D'}((0,T)\times\O).
\end{equation}
Taking a test function $\varphi(t,x)=\psi(t)\phi(x)$ in \eqref{lar2+}, where $\psi(t)\in \mathcal{D}(0,1),\; \phi\in \mathcal{D}(\O),$
we have $$\int_{0}^{1}\left(\int_{\O}\rho_{m}\phi dx\right)\psi'(t)dt+\int_{0}^{1}\int_{\O}\rho_{m}\u_{m}\nabla\phi\psi dxdt=0.$$
Letting $m\to \infty$ and using \eqref{lar4}, we have
$$\int_{0}^{1}\left(\int_{\O}\rho_{s}\phi dx\right)\psi'(t)dt=0,$$
which means that $\rho_{s}$ is independent of time $t$.

Similar to Lemma \ref{Le5}, we have
$$\rho_{m}^{\gamma+\a} \text { is bounded in } L^{1}((0,1)\times\O) \quad \text { independently of } m>0,$$for some $\a>0.$
So we conclude that
\begin{equation}\label{lar5}
\rho_{m}^{\gamma}\to \overline{\rho^{\gamma}} \text { weakly in } L^{1}((0,1)\times\O).
\end{equation}
Therefore, passing to the limit in \eqref{I1b} and using \eqref{lar2}, \eqref{lar4}, we obtain
\begin{equation}\label{lar6}
\nabla\overline{\rho^{\gamma}}=-\lambda\Dv\left(\nabla\d_{s}\odot\nabla\d_{s}-
(\frac{1}{2}|\nabla\d_{s}|^{2}+F(\d_{s}))I_{3}\right) \text { in } \mathcal{D'}(\O).
\end{equation}
where $\d_{s}$ is the solution to \eqref{lar3} with its boundary condition \eqref{lar1+}.

On the other hand, we can use $L^{p}-$version of celebrated div-curl lemma to show that the convergence in \eqref{lar5} is strong. We refer the readers to \cite{FP} and \cite{HW} for details.
Due to the strong convergence in \eqref{lar5}, we have $$\rho_{m}\to\rho_{s} \text { strongly in } L^{\gamma}((0,1)\times\O).$$
This, combined with \eqref{lar5} and \eqref{lar6}, gives $$\nabla\rho_{s}^{\gamma}=-\lambda\Dv\left(\nabla\d_{s}\odot\nabla\d_{s}-(\frac{1}{2}|\nabla\d_{s}|^{2}+F(\d_{s}))I_{3}\right) $$ in the sense of distributions.
Denoting $$H=\rho_{s}^{\gamma}-\lambda F(\d_{s}),$$
and using \eqref{lar3} to rewrite the above equation as follows:
$$\nabla H=-\lambda\nabla\d_{s}f(\d_{s}).$$
Notice that $$f(\d_{s})=\nabla_{\d_{s}}F(\d_{s}),$$ we have $$\nabla(H+\lambda F(\d_{s}))=0,$$
that is,$$\nabla\rho_{s}^{\gamma}=0.$$

Finally, by the energy inequality, the energy converges to a finite constant as $t\to \infty$:
$$E_{\infty}:=\overline{\lim_{t\to \infty}}E(t),$$
by \eqref{lar4}, we have $$\lim_{m\to\infty}\int_{m}^{m+1}\int_{\O}\rho|\u|^{2}dxdt=0.$$
Thus,\begin{equation*}\begin{split}
&E_{\infty}=\overline{\lim_{m\to\infty}}\int_{m}^{m+1}\int_{\O}\left(\frac{1}{2}\rho|\u|^{2}+
\frac{1}{\gamma-1}\rho^{\gamma}+\lambda\frac{1}{2}|\nabla\d|^{2}+\lambda F(\d)\right)dxdt
\\&=\int_{\O}\left(\frac{1}{\gamma-1}\rho_{s}^{\gamma}+\lambda \frac{1}{2}|\nabla\d_{s}|^{2}+\lambda F(\d_{s})\right)dx.
\end{split}\end{equation*}
We observe that
$$\lim_{m\to\infty}\int_{m}^{m+1}\int_{\O}F(d)dxdt=\int_{\O}F(\d_{s})dx$$
and $$\lim_{m\to\infty}\int_{m}^{m+1}\int_{\O}\frac{1}{2}|\nabla\d|^{2}dxdt=\int_{\O}\frac{1}{2}|\nabla\d_{s}|^{2}dx$$
because of \eqref{lar3}.
Moreover, using \eqref{I1a} one can easily see that
$$\rho(t,x)\to\rho_{s} \text { weakly in } L^{\gamma}(\O) \text { as } t\to \infty.$$
Thus, we have
\begin{equation*}\begin{split}
&E_{\infty}:=\int_{\O}\left(\frac{1}{\gamma-1}\rho_{s}^{\gamma}+\lambda\frac{1}{2}|\nabla\d_{s}|^{2}+\lambda F(\d_{s})\right)dx\\& \leq \lim_{m\to\infty}\inf\int_{\O}\left(\frac{1}{\gamma-1}\rho^{\gamma}+\lambda \frac{1}{2}|\nabla\d|^{2}+\lambda F(\d)\right)dx \\& \leq \lim_{t\to\infty}\sup\int_{\O}\left(\frac{1}{\gamma-1}\rho_{s}^{\gamma}+\lambda \frac{1}{2}|\nabla\d_{s}|^{2}+\lambda
F(\d_{s})\right)dx\\& \leq\overline{\lim_{t\to\infty}}\int_{\O}\left(\frac{1}{2}\rho|\u|^{2}
+\frac{1}{\gamma-1}\rho_{s}^{\gamma}+\lambda\frac{1}{2}|\nabla\d_{s}|^{2}+\lambda F(\d_{s})\right)dx\\&
 =\overline{\lim_{t\to\infty}}E(t)=E_{\infty},
\end{split}\end{equation*}
which means $$\lim_{t\to\infty}\int_{\O}\frac{1}{\gamma-1}\rho^{\gamma}dx=\int_{\O}\frac{1}{\gamma-1}\rho_{s}^{\gamma}dx,$$
\eqref{Tlar1} follows since the space $L^{\gamma}$ is uniformly convex.

The proof of Theorem \ref{T2} is complete.

\bigskip

\section*{Acknowledgments}

D. Wang's research was supported in part by the National Science
Foundation under Grant DMS-0906160 and by the Office of Naval
Research under Grant N00014-07-1-0668. C. Yu's research was supported in part by the National Science
Foundation under Grant DMS-0906160.

\bigskip\bigskip

\end{document}